\theoremstyle{plain}
\newtheorem{teo}{Theorem}[section]
\newtheorem{cor}[teo]{Corollary}
\newtheorem{prop}[teo]{Proposition}
\newtheorem{lem}[teo]{Lemma}
\newtheorem{cla}[teo]{Claim}
\newtheorem{con}[teo]{Conjecture}
\theoremstyle{defin}
\newtheorem{defin}[teo]{Definition}
\newtheorem{Q}[teo]{Question}
\newcommand{\system}[1]{\mbox{\fontfamily{cmss}\fontshape{n}\fontseries{m}\selectfont#1}}
\newcommand{\ZFC}{\system{ZFC}}
\newcommand{\GCH}{\system{GCH}}
\newcommand{\PFA}{\system{PFA}}
\newcommand{\MM}{\system{MM}}
\newcommand{\SSH}{\system{SSH}}
\newcommand{\BPFA}{\system{BPFA}}
\newcommand{\SCH}{\system{SCH}}
\newcommand{\subsetsim}{\mathrel{\ooalign{\raise.4ex\hbox{$\subset$}\cr$\raise-.9ex\hbox{$\sim$}$}}}
\DeclareMathOperator{\cof}{cof}
\DeclareMathOperator{\Ult}{Ult}
\DeclareMathOperator{\ot}{ot}
\DeclareMathOperator{\id}{id}
\DeclareMathOperator{\Card}{Card}
\DeclareMathOperator{\crit}{crit}
\DeclareMathOperator{\pp}{pp}
\DeclareMathOperator{\PP}{PP}
\title{No cardinal correct inner model elementarily embeds into the universe}
\author{Gabriel Goldberg \\ Evans Hall \\
University Drive \\
Berkeley, CA 94720
\\ \vspace{0.5cm} \\ Sebastiano Thei\footnote{Sebastiano Thei was partially supported by the Italian PRIN 2017 Grant ``Mathematical Logic: models, sets, computability,'' by the Italian PRIN 2022 Grant ``Models, sets and classifications'' and by the European Union - Next Generation EU.}\\
Universit{\`a} degli Studi di Udine \\
Via delle Scienze, 206, 33100 Udine (UD), Italy \\
E-mail address:
thei91.seba@gmail.com}
\date{}
\begin{document}

\thispagestyle{plain}

\markboth{G. Goldberg \& S. Thei}{No cardinal correct inner model elementarily embeds into the universe}

\maketitle

\begin{center}
    \textbf{Abstract}
\end{center}
\begin{quote}
    An elementary embedding $j:M\rightarrow N$ between two inner
    models of $\ZFC$ is \emph{cardinal preserving}
if $M$ and $N$ correctly compute the class of cardinals. We look at the case $N=V$ and show that there is no nontrivial cardinal preserving elementary
embedding from $M$ into $V$, answering a question of Caicedo.
\end{quote}

\section{Introduction}

Large cardinal axioms are typically formulated in terms of elementary embeddings from the universe $V$ into some transitive subclass $M$. By demanding a stronger and stronger degree of resemblance
between $V$ and $M$, one obtains stronger
and stronger principles of infinity. For example, one may require $M$ to have more and more fragments of $V$: \emph{$\gamma$-strong} cardinals have $V_{\gamma}\subseteq M$, and \emph{$n$-superstrong} cardinals have $V_{j^{(n)}(\kappa)}\subseteq M$. Similarly, one may ask how close $M$ is: \emph{$\gamma$-supercompact} cardinals have $M^\gamma\subseteq M$, and \emph{$n$-huge} cardinals have $M^{j^{(n)}(\kappa)}\subseteq M$. 

Straining the limits of consistency, Reinhardt \cite{MR2617032} considered the natural extreme of this trend in which the target model is the entire universe. Few years later, Kunen \cite{MR0311478}, with his celebrated inconsistency theorem, refuted this suggestion and provided what seems to be an upper bound in the formulation of large cardinal axioms.
\begin{teo}[Kunen, \cite{MR0311478}]
    There is no nontrivial elementary embedding from the universe to itself.
\end{teo}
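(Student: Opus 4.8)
The plan is to derive a contradiction from a hypothetical nontrivial elementary embedding $j\colon V\to V$ by exploiting a combinatorial incompatibility at the supremum of the critical sequence. First I would fix $\kappa=\crit(j)$ and build the critical sequence $\kappa_0=\kappa$, $\kappa_{n+1}=j(\kappa_n)$, putting $\lambda=\sup_{n<\omega}\kappa_n$. Two basic facts about $\lambda$ drive everything. On the one hand $j(\lambda)=\lambda$: indeed $j(\lambda)\geq\sup_n j(\kappa_n)=\sup_n\kappa_{n+1}=\lambda$, while any $\beta<\lambda$ satisfies $\beta<\kappa_n$ for some $n$, so $j(\beta)<\kappa_{n+1}<\lambda$, giving $j''\lambda\subseteq\lambda$ and hence $j(\lambda)=\lambda$. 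On the other hand $\kappa\notin\ran(j)$: for $\beta<\kappa$ we have $j(\beta)=\beta<\kappa$, and for $\beta\geq\kappa$ we have $j(\beta)\geq j(\kappa)=\kappa_1>\kappa$.

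The combinatorial heart of the argument is the existence of an $\omega$-Jónsson function. By the Erd\H{o}s--Hajnal theorem there is a function $f\colon[\lambda]^{\omega}\to\lambda$ such that $f''[A]^{\omega}=\lambda$ for every $A\subseteq\lambda$ with $|A|=\lambda$; this is a first-order property of the set $f$ and the ordinal $\lambda$. Since $f$ is a set, $j(f)$ is a set, and applying $j$ to the statement ``$f$ is an $\omega$-Jónsson function for $\lambda$'' yields that $j(f)$ is an $\omega$-Jónsson function for $j(\lambda)=\lambda$. I would also isolate the following lemma on the action of $j$ on countable sets of ordinals: if $t\in[\lambda]^{\leq\omega}$ then $j(t)=j''t$. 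This holds because, enumerating $t$ by $e\colon\omega\to\lambda$ (the order type of $t$ is a countable ordinal, hence below $\kappa$), elementarity gives $j(e)(n)=j(e)(j(n))=j(e(n))$ for each $n<\omega$, so that $j(t)=\ran(j(e))=\{j(e(n)):n<\omega\}=j''t$.

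Now I would set $A=j''\lambda$. Because $j$ is injective, $|A|=\lambda$, so the Jónsson property of $j(f)$ gives $j(f)''[A]^{\omega}=\lambda$; since $\kappa<\lambda$, choose $s\in[A]^{\omega}$ with $j(f)(s)=\kappa$. As $s\subseteq A=j''\lambda$ is countable, writing $t=\{\alpha<\lambda:j(\alpha)\in s\}$ gives a countable $t\subseteq\lambda$ with $j''t=s$, and by the lemma $s=j''t=j(t)$. Elementarity then yields
\[
\kappa=j(f)(s)=j(f)(j(t))=j\bigl(f(t)\bigr),
\]
with $f(t)<\lambda$ an ordinal. Thus $\kappa\in\ran(j)$, contradicting $\kappa\notin\ran(j)$ established above, and the proof is complete.

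I expect the main obstacle to be the combinatorial input, namely producing the $\omega$-Jónsson function and verifying that its defining property is transported correctly by $j$; everything else is bookkeeping about the critical sequence and the behavior of $j$ on small sets. A secondary point requiring care is the ambient framework: one must read ``$j\colon V\to V$'' as a (possibly definable, or class-sized in a conservative extension such as \system{GBC}) elementary embedding, but since $f$, $s$, and $t$ are genuine sets, each application of $j$ and of elementarity above is to set parameters and is therefore unproblematic.
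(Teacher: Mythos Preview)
Your proof is correct: this is essentially Kunen's original argument via the Erd\H{o}s--Hajnal theorem that every infinite cardinal carries an $\omega$-J\'onsson function. Each step is sound, including the verification that $j(t)=j''t$ for countable $t$ and the use of elementarity to transport the J\'onsson property of $f$ to $j(f)$.

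The paper, however, gives a different proof. Rather than working at $\lambda=\kappa_\omega(j)$ with an $\omega$-J\'onsson function, it observes that $\lambda^{++}$ is a fixed point of $j$, restricts $j$ to some $V_\alpha$ with $\alpha$ a fixed point above $\lambda^{++}$, and applies the elementary-embedding characterization of J\'onsson cardinals (Proposition~\ref{Jonnsoness}) to conclude that $\lambda^{++}$ is J\'onsson. This contradicts Tryba's theorem (Theorem~\ref{succ of reg not Jon}) that the successor of a regular cardinal is never J\'onsson. Your route is the classical combinatorial one and is self-contained modulo the Erd\H{o}s--Hajnal construction; the paper's route is chosen to foreshadow the machinery (J\'onsson cardinals, embeddings into $V$, stationary reflection) used throughout the rest of the article, and in particular motivates Lemma~\ref{no fixed points} and Lemma~\ref{regular or pred of Jonsson}.
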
 Kunen's proof actually shows that if $j:V\rightarrow M$ is a nontrivial elementary embedding and $\lambda$ is the supremum of the \emph{critical sequence of $j$}, then $V_{\lambda+1}\nsubseteq M$. 
\begin{defin}
    Let $j : M \rightarrow N$ be an elementary embedding between two
transitive models of $\ZFC$. The \emph{critical sequence of $j$} is the sequence $\langle\kappa_n(j)\rangle_{n<\omega}$
defined by setting $\kappa_n(j) = j^{(n)}(\crit(j))$. The ordinal $\kappa_\omega(j)$ is the supremum of the
critical sequence of $j$.
\end{defin}
Looking for inconsistencies and with attention shifted upward to stronger principles, a new breed of
large cardinal hypothesis was introduced, the rank-into-rank embeddings. \emph{Axiom} $I_2(\lambda)$, for example, states that there is an elementary embedding $j:V\rightarrow M$ such that $\crit(j)<\lambda$, $j(\lambda)=\lambda$ and $V_\lambda\subseteq M$. Caicedo \cite{Ca} pushed this further and proposed another way to extend the large cardinal
hierarchy in $\ZFC$ and to obtain axioms just at the edge of Kunen inconsistency. Following the usual template of resembling $V$, the idea here is to impose agreement of cardinals between the models involved.
\begin{defin}
    Let $j : M \rightarrow N$ be an elementary embedding between two
transitive models of $\ZFC$. Then, $j$ is \emph{cardinal preserving} if $M$, $N$ and $V$ have the same class of cardinals, i.e. $\Card^M=\Card^N=\Card$.
\end{defin}
\begin{Q}[Caicedo]\label{Caicedo's question}
    Assume $j : M \rightarrow N$ is a nontrivial elementary embedding. Can $j$ be cardinal preserving?
\end{Q}
The expectation is that Question \ref{Caicedo's question} has a negative answer. Taking the first step
towards this line of research, Caicedo considered the case where either $M$ or $N$ is $V$. Both principles have significant consistency strength:
\begin{teo}[\cite{goldberg2021note}, Theorem 2.10]
    The existence of a cardinal preserving embedding $j:V\rightarrow N$ implies the consistency of $\ZFC$ $+$ there is a strongly compact cardinal.
\end{teo}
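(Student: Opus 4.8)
The plan is to show that $\kappa=\crit(j)$ is strongly compact below the first fixed point of $j$ above $\kappa$, and then to reflect this into a genuine model of $\ZFC$. First I would record the structural facts forced by cardinal preservation. Since $j$ is elementary and $\Card^N=\Card$, the map $j$ commutes with the successor-cardinal operation: for every cardinal $\mu$ one has $j(\mu^+)=(j(\mu)^+)^N=j(\mu)^+$, because successor cardinals are absolute between models with the same cardinals. Writing $\lambda=\kappa_\omega(j)$, this gives $j(\lambda)=\lambda$ and $j(\lambda^+)=\lambda^+$, and more importantly it pins down the cofinal behaviour of $j$ at successor cardinals. Note also that $\kappa$ is measurable, via the normal measure $\{X\subseteq\kappa:\kappa\in j(X)\}$.

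The engine of the argument is the observation (essentially Ketonen's) that a point of discontinuity of $j$ produces a uniform ultrafilter. Concretely, if $\delta$ is regular in $V$ and $\sup(j''\delta)<j(\delta)$, then setting $\eta=\sup(j''\delta)$ the family $U_\delta=\{X\subseteq\delta:\eta\in j(X)\}$ is a $\kappa$-complete uniform ultrafilter on $\delta$: $\kappa$-completeness is immediate from $\crit(j)=\kappa$, and uniformity holds because $j(\alpha)<\eta<j(\delta)$ for every $\alpha<\delta$. Thus the task reduces to verifying that $j$ is discontinuous at every regular $\delta$ with $\kappa\le\delta<\lambda$. For successor cardinals this is exactly where successor-preservation pays off: if $\delta=\mu^+$ with $\kappa\le\mu<\lambda$, then $j(\delta)=j(\mu)^+$ is a successor cardinal, hence regular in $V$, while $|j''\delta|=\mu^+<j(\mu)^+$, so $j''\delta$ cannot be cofinal in $j(\delta)$.

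Granting discontinuity at all regular $\delta<\lambda$, Ketonen's theorem in its local form yields that $\kappa$ is $\gamma$-strongly compact for every $\gamma<\lambda$: a uniform $\kappa$-complete ultrafilter on each regular cardinal in $[\kappa,\gamma]$ suffices to build a fine $\kappa$-complete measure on $\mathcal{P}_\kappa(\gamma)$, and by monotonicity of $\gamma$-strong compactness it is enough to treat a cofinal set of $\gamma<\lambda$. I would then reflect: each witnessing fine measure on $\mathcal{P}_\kappa(\gamma)$ has rank below $\lambda$, so if $\alpha$ is a worldly cardinal with $\kappa<\alpha\le\lambda$ then $V_\alpha\models\ZFC+$``$\kappa$ is strongly compact'', which gives the consistency of $\ZFC$ together with a strongly compact cardinal; the same reflection can be carried out inside $N$ after transporting the conclusion through $j$.

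The main obstacle is the cofinality control needed in the second and third steps: cardinal preservation does not by itself preserve regularity at \emph{limit} cardinals (a Prikry-style singularization preserves cardinals while changing cofinalities), so a priori $j(\delta)$ could be singular in $V$ for some inaccessible $\delta<\lambda$, destroying discontinuity there and leaving a gap in Ketonen's hypothesis. Closing this gap---showing that cardinal preservation together with the embedding forces $j$ to remain discontinuous at the inaccessible cardinals below $\lambda$ (equivalently, that images of regular cardinals stay regular in $V$)---is the crux, and the worldly $\alpha\in(\kappa,\lambda]$ required for the final reflection must be extracted from the very same analysis.
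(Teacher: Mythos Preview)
The paper does not contain a proof of this theorem; it is merely quoted from \cite{goldberg2021note} as background, so there is no in-paper argument to compare your proposal against.

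As for the proposal itself, you have correctly isolated the Ketonen mechanism and the successor-cardinal case, but the two gaps you flag are genuine and are not closed by what you have written. First, cardinal preservation gives you nothing about cofinalities of limit cardinals, so for an inaccessible $\delta\in(\kappa,\lambda)$ you have no reason to expect $j(\delta)$ to be regular in $V$, and hence no automatic discontinuity of $j$ at $\delta$; without that, Ketonen's criterion (which requires a uniform $\kappa$-complete ultrafilter on \emph{every} regular cardinal in the interval, not just the successors) is simply not satisfied. Second, your reflection step assumes a worldly $\alpha\in(\kappa,\lambda]$, but nothing in the hypotheses produces one: measurability of $\kappa$ gives worldly cardinals \emph{below} $\kappa$, not above, and you have not argued that $\lambda$ itself is worldly (strong limit is not enough). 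So as it stands the proposal is a reasonable outline of one possible strategy, with its two hard points explicitly left open rather than resolved.
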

\begin{teo}[\cite{Ca}, Theorem 2.11]
    Assume that there is a cardinal preserving embedding $j : M \rightarrow V$. Then there are inner models with strong cardinals.
\end{teo}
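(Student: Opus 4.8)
The plan is to argue by contradiction: assume there is no inner model with a strong cardinal, and derive a violation of the rigidity of the core model. Under this anti-large-cardinal hypothesis, Steel's core model theory (the Jensen--Steel construction below a strong cardinal) supplies an inner model $K$ that is definable by a fixed formula, satisfies $\ZFC + \GCH$, enjoys \emph{weak covering} (so that $(\lambda^+)^K = \lambda^+$ for every singular cardinal $\lambda$), and is \emph{rigid}: there is no nontrivial elementary embedding of $K$ into $K$. The goal is to show that the hypothesised $j : M \to V$ would manufacture exactly such a forbidden self-embedding.

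First I would check that the core model is available on both sides of $j$. Since ``there is no inner model with a strong cardinal'' is a sentence with no parameters, elementarity of $j : M \to V$ gives $M \models \phi \iff V \models \phi$, so the hypothesis holds in $M$ as well, and it is downward absolute to inner models. Thus the core models $K^M$ (computed inside $M$) and $K^V$ (computed inside $V$) both exist and have the listed core-model properties in their respective universes. Because $K$ is defined by a fixed formula, elementarity of $j$ yields $j(K^M) = K^V$ and, relativising to the definable predicate, an elementary embedding $j \restriction K^M : K^M \to K^V$. Assuming $j$ is nontrivial, $\crit(j)$ exists and witnesses that this restricted map is itself nontrivial.

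The heart of the argument --- and the step where cardinal preservation is indispensable --- is to prove that $K^M = K^V$. Since $K^M \subseteq M \subseteq V$, every mouse building up $K^M$ lives in $V$, so by the universality of $K^V$ the model $K^M$ cannot out-iterate $K^V$; symmetrically, $K^M$ is universal from the point of view of $M$. Comparing $K^M$ and $K^V$ by coiteration therefore produces a common iterate, and to conclude that the two iterations are trivial I would invoke weak covering on both sides together with the hypothesis $\Card^M = \Card$: for a singular cardinal $\mu$ one has $(\mu^+)^{K^M} = (\mu^+)^M = (\mu^+)^V = (\mu^+)^{K^V}$, and this agreement of successors of singulars on a proper class of ordinals pins down the comparison, forcing $K^M = K^V$. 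This is precisely where the agreement of cardinals between $M$ and $V$ enters, and I expect the careful bookkeeping of the coiteration --- ruling out drops and truncations on either side --- to be the main obstacle.

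Finally, combining the two previous steps gives $j \restriction K^V : K^V \to K^V$, a nontrivial elementary embedding of the core model into itself, contradicting its rigidity. Hence the assumption was false, and there is an inner model with a strong cardinal.
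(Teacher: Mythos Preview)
The paper does not prove this theorem: it is quoted as Theorem 2.11 of Caicedo's paper \cite{Ca} and no argument is given in the present manuscript. So there is no ``paper's own proof'' to compare against here.

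That said, your sketch is the standard core-model route and is essentially what Caicedo does. A couple of remarks on the write-up. First, since $K$ is a proper class, writing $j(K^M)=K^V$ is shorthand; what you really use is that the defining formula for $K$ is absolute enough that $j$ restricts to an elementary map $K^M\to K^V$, which is fine. Second, the delicate point you flag --- showing $K^M=K^V$ --- deserves a bit more than weak covering at singulars: you need that $K^M$ is iterable in $V$ (not just in $M$) so that the coiteration of $K^M$ and $K^V$ can be carried out in $V$. Below a strong cardinal the relevant mice are simple enough (linear iterations, $\Pi^1_2$-type iterability) that this absoluteness goes through, but it should be stated explicitly. Once the coiteration is available, the equality of successors of singulars on a proper class (from weak covering plus $\Card^M=\Card$) forces both sides of the comparison to be trivial, giving $K^M=K^V$ and hence a nontrivial $j\restriction K^V:K^V\to K^V$, contradicting rigidity.
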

In \cite[Corollary 2.10]{Ca}, it has been shown that $\PFA$ rules out the case $N=V$. On the other hand, the first author \cite[Theorem 6.6]{goldberg2020some} proved that the existence of a proper class of almost strongly compact cardinals refutes the case $M=V$. In this paper we show that cardinal preserving embeddings $j:M\rightarrow V$ are inconsistent with $\ZFC$. It is still open whether $\ZFC$ alone can refute cardinal preserving embeddings from $V$ to $N$.

It will be assumed throughout the paper that, if not otherwise specified, all embeddings are elementary and between transitive models of $\ZFC$.

\section{Forcing axioms} 
A first impulse for this investigation around cardinal preserving embeddings is due to the following conjecture \cite[Conjecture 1]{MR2385636}.

\begin{con}[Caicedo, Veli\v{c}kovi\'{c}]\label{Conj}
    Assume $W\subseteq V$ are models of $\MM$ with the same cardinals. Then $W$ and $V$ have the same $\omega_1$-sequences of ordinals. 
\end{con}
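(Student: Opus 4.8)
The plan is to establish the nontrivial inclusion $\pre{\omega_1}{\Ord}\cap V\subseteq W$; since $W\subseteq V$ the reverse is automatic, and I would note at the outset that capturing all $\omega_1$-sequences is strictly stronger than capturing reals, because a real $r\in\pre{\omega}{2}$ can be padded to an $\omega_1$-sequence $\bar r\in\pre{\omega_1}{2}$ from which $r$ is recovered by restriction. I would reduce the statement to one cardinal at a time, proving by induction on $\kappa$ that every $f\in(\pre{\omega_1}{\kappa})^V$ lies in $W$. When $\cof(\kappa)>\omega_1$ every such $f$ is bounded below $\kappa$ and the inductive hypothesis applies, so the real content is concentrated at the base case $\kappa=\omega_1$ (equivalently, sequences of countable ordinals, which subsumes the reals) and at singular $\kappa$ of cofinality $\omega$ or $\omega_1$.

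For the singular cases the engine is that $\MM$ implies $\SCH$ together with strong covering and reflection: using $\SCH$ in both models and the agreement $\Card^W=\Card$, I would cover the range of a cofinal $f$ by a scale living in $W$ and reduce to the regular values below $\kappa$, taking care that same cardinals does not by itself give same cofinalities (witness Prikry forcing), so the equality of cofinalities must be extracted along the induction—e.g.\ once sequences of countable ordinals are captured, every $\omega$-cofinal singularity of $V$ is already singular in $W$. The base case is where the forcing-axiom strength must be used in earnest: here I would invoke the reformulation of $\MM$ as a $\Sigma_1$-absoluteness principle for $H_{\omega_2}$ under stationary-set-preserving forcing, together with the saturation of the nonstationary ideal on $\omega_1$, in order to pin down an $\omega_1$-sequence by an absolutely computable code. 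Concretely, I would adapt the Caicedo--Veli\v{c}kovi\'c coding, which under $\BPFA$ assigns to a subset of $\omega_1$ a triple of ordinal parameters through a reflecting ladder system, and argue that $W$ and $V$, having the same cardinals and both satisfying $\MM$, compute the same codes and hence decode the same sequence.

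The hard part—and the reason the conjecture is still open—is that $V$ need not be a forcing extension of $W$, so none of this machinery transfers automatically between the two models: the $\Sigma_1$-absoluteness of $H_{\omega_2}$ and the generic embeddings furnished by the saturated ideal speak about $V$ and its own generic extensions, not about an arbitrary inner model sharing its cardinals. The crux is therefore to manufacture a \emph{forcing-free} transfer principle, showing that the reflecting and guessing structures that $\MM$ provides in $V$ can be matched inside $W$ purely from the coincidence of cardinals. I expect this to force a simultaneous treatment of reals and of $\omega_1$-sequences, since the coding of a sequence passes through reals while the capture of those reals is itself part of the conclusion, so the argument would have to bootstrap the two together rather than reduce one cleanly to the other.
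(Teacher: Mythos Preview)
The statement you are attempting to prove is not a theorem in the paper but an open \emph{conjecture} of Caicedo and Veli\v{c}kovi\'{c}; the paper quotes it as Conjecture~\ref{Conj} and uses it only as a hypothesis in the subsequent proposition. There is therefore no ``paper's own proof'' to compare against, and your write-up is not expected to be comparable to anything in the text.

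More to the point, your proposal is not a proof but a strategy outline, and you yourself identify the genuine gap: the Caicedo--Veli\v{c}kovi\'{c} coding and the $\Sigma_1$-absoluteness machinery of $\MM$ relate a model to its \emph{generic extensions}, whereas here $V$ is an arbitrary outer model of $W$ sharing its cardinals. Nothing in your plan supplies the ``forcing-free transfer principle'' you call for; the inductive reduction to the base case is reasonable bookkeeping, but the base case (capturing $(\pre{\omega_1}{\omega_1})^V$ in $W$) is exactly the open problem, and invoking the coding ``adapted'' to this situation is a restatement of the conjecture rather than a step toward it. In short, the proposal correctly locates where the difficulty lies but does not overcome it, which is consistent with the statement's status as a conjecture.
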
 
The conjecture is motivated by a result of Caicedo and Veli\v{c}kovi\'{c} \cite[Theorem 1]{MR2231126} which asserts that, if $W\subseteq V$ are transitive models of $\ZFC+\BPFA$ and $\omega_2^W=\omega_2^V$, then $P(\omega_1)\subseteq W$. In light of this, one may ask whether any two models $W\subseteq V$ of some strong forcing axiom with
the same cardinals have the same $\omega_1$-sequences of ordinals. Thus, Caicedo and Veli\v{c}kovi\'{c}'s conjecture is a possible way to formalize this idea.

However, there is a tension between conjecture \ref{Conj} and cardinal preserving embeddings $j:M\rightarrow N$ with $M\vDash\MM$. Indeed, under $\MM$, the conjecture would refute these embeddings: 
\begin{prop}
    If $M\subseteq V$ are models of $\MM$ and $j:M\rightarrow N$ is a cardinal preserving embedding, then conjecture \ref{Conj} implies that $j$ is the identity map. 
\end{prop}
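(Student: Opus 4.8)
The plan is to use Conjecture \ref{Conj} only to reduce the statement to a Kunen-type rigidity fact about $j$, and then to run a Kunen-style argument at the supremum of the critical sequence.

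First I would unpack the hypotheses. Since $j$ is cardinal preserving, $\Card^N=\Card^V$, and by elementarity $N\vDash\MM$; as $N\subseteq V$ is an inner model, the pair $N\subseteq V$ meets the hypotheses of Conjecture \ref{Conj}, so $N$ and $V$ have the same $\omega_1$-sequences of ordinals. The same applies to $M\subseteq V$. Hence $\pre{\omega_1}{\Ord}\subseteq M$ and $\pre{\omega_1}{\Ord}\subseteq N$, i.e. $M$, $N$ and $V$ share all $\omega_1$-sequences of ordinals; in particular they share all countable sequences of ordinals, and, having the correct cardinals, they compute cardinals and all cofinalities $\le\omega_1$ exactly as $V$ does.

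Next, assume toward a contradiction that $j\neq\id$. I would first locate the critical point by fixing cardinals inductively. Suppose every cardinal below a cardinal $\theta$ is fixed by $j$. If $\theta=\gamma^+$ then $j(\theta)=j(\gamma)^+=\gamma^+=\theta$. If $\theta$ is a limit cardinal with $\cof^V(\theta)\le\omega_1$, choose a cofinal sequence $c$ in $\theta$ consisting of smaller cardinals; since $c$ has length $\le\omega_1$ it lies in $M$ by $\omega_1$-closure, and $j\circ c=c$ by hypothesis, so by elementarity $c$ is cofinal in $j(\theta)$ and $j(\theta)=\sup(\ran c)=\theta$. Thus the induction can stall only at a limit cardinal of cofinality $\ge\omega_2$, so the least ordinal moved by $j$, namely $\kappa=\crit(j)$, is a cardinal inaccessible in $M$ with $\cof^V(\kappa)\ge\omega_2$.

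With the critical point in hand I would aim for the Kunen contradiction at $\lambda=\kappa_\omega(j)$. Here $j(\lambda)=\lambda$, the critical sequence $\langle\kappa_n\rangle_{n<\omega}$ is a countable sequence of ordinals and so lies in $M$ and $N$, and every countable subset of $\lambda$ is common to the three models. Fix in $M$ an Erd\H{o}s--Hajnal function $f\colon[\lambda]^\omega\to\lambda$, so that $f''[X]^\omega=\lambda$ for every $X\subseteq\lambda$ of size $\lambda$; by elementarity $j(f)$ is such a function in $N$. The key point is that any countable $t\subseteq j''\lambda$ equals $j(u)$ for $u=(j\restriction\lambda)^{-1}{}''t$, which is a countable set of ordinals and hence lies in $M$; consequently $j(f)(t)=j(f(u))\in\ran(j)$, so $j(f)$ maps $[j''\lambda]^\omega$ into $\ran(j)$ and in particular misses $\kappa\notin\ran(j)$. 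Therefore, as soon as a subset of $j''\lambda$ of size $\lambda$ belongs to $N$, the Erd\H{o}s--Hajnal property of $j(f)$ in $N$ is contradicted, and $j=\id$ follows.

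The hard part will be exactly this last input: certifying a size-$\lambda$ subset of $\ran(j)$ --- ideally $j''\lambda$ itself --- inside $N$. This cannot come from $\omega_1$-closure alone, since an ultrapower by a normal measure is even $\crit(j)$-closed yet its target contains no such set, which is precisely why that embedding escapes Kunen's theorem. It is here that I expect the cardinal preserving hypothesis to be indispensable: it excludes exactly the measure- and Prikry-type configurations, in which the target invents cardinals or alters cofinalities, and it should be leveraged to place the relevant fragment of the range into $N$. I regard establishing this as the crux of the argument; the remainder is the bookkeeping sketched above.
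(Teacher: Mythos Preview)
Your opening is correct and matches the paper: from the Conjecture, $\MM$, and cardinal preservation you get that $M$ and $N$ are closed under countable sequences of ordinals, and hence $j(\lambda)=\lambda$ for $\lambda=\kappa_\omega(j)$. (The paragraph locating $\crit(j)$ as a cardinal of $V$-cofinality $\geq\omega_2$ is correct but unnecessary for what follows.)

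The gap you yourself flag is genuine, and you do not fill it. Nothing in the hypotheses obviously places a $\lambda$-sized subset of $j[\lambda]$ into $N$; cardinal preservation excludes the measure and Prikry configurations you mention, but it does not by itself deliver a covering property of that strength, and you offer no argument that it does. So the $\omega$-J\'{o}nsson route, as presented, is not a proof---it is a proof sketch with its decisive step left open.

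The paper sidesteps this obstacle by running Zapletal's scale version of the Kunen argument instead of the Erd\H{o}s--Hajnal one. In $M$ one fixes a cofinal sequence $\langle\delta_n\rangle_{n<\omega}$ of successor cardinals in $(\kappa,\lambda)$ and a scale $\langle f_\alpha\rangle_{\alpha<\lambda^+}$ in $\prod_n\delta_n$; such a scale exists because $\lambda^\omega=\lambda^+$, which follows from $\MM$ via $\SCH$ and $2^{\aleph_0}=\aleph_2$ and is absolute to $M$ and $N$ by $\omega$-closure plus cardinal correctness. Elementarity and the fact that $j[\lambda^+]$ is cofinal in $j(\lambda^+)=\lambda^+$ give that $\langle j(f_\alpha)\rangle_{\alpha<\lambda^+}$ is a scale in $\prod_n j(\delta_n)$, in $N$ and (by $\omega$-closure of $N$) in $V$. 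Now the only object one needs inside $N$ is the \emph{countable} sequence $h=\langle\sup j[\delta_n]\rangle_{n<\omega}$, which is there by $\omega$-closure. Since each $\delta_n$ is a successor cardinal and $N$ is cardinal correct, $j(\delta_n)$ is regular and strictly above $\delta_n$, so $h(n)<j(\delta_n)$ and $h\in\prod_n j(\delta_n)$. Some $j(f_\alpha)$ must then eventually dominate $h$; but $j(f_\alpha)(n)=j(f_\alpha(n))\leq\sup j[\delta_n]=h(n)$ for every $n$, a contradiction. The scale machinery thus reduces the problem to a single countable witness, which $\omega$-closure handles directly---no $\lambda$-sized fragment of $\ran(j)$ is ever required.
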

\begin{proof}
    Suppose towards a contradiction that $\kappa=\crit(j)$ exists and let $\lambda$ be $\kappa_\omega(j)$. Conjecture \ref{Conj} ensures that the sequence $\langle\kappa_n(j)\rangle_{n<\omega}$ belongs to $M$. Therefore, \begin{center}
$j(\lambda)=j(\sup_{n<\omega}\kappa_n(j))=\sup_{n<\omega}\kappa_{n+1}(j)=\lambda$.
\end{center}
    As  $\cof^M(\lambda)=\omega$, we may fix in $M$ a sequence cofinal in $\lambda$ consisting of successor cardinals $\Vec{\delta}=\langle\delta_n\rangle_{n<\omega}$, and a \emph{scale} $\langle f_\alpha\rangle_{\alpha<\lambda^+}$ at $\lambda$ relative to the sequence $\Vec{\delta}$. A scale here is a sequence of functions from $\prod_{n<\omega}\delta_n$ which is increasing and cofinal with respect to $<^\ast$. For functions $f$ and $g$ in $\prod_{n<\omega}\delta_n$, $f <^\ast g$
means that there exists $n< \omega$ such that for every $m>n$, $f(m) < g(m)$. Since $\MM$ implies $\SCH$ and $2^{\aleph_0}=\aleph_2$, it holds that $\lambda^\omega=\lambda^+$. Note that by closure under countable sequences and cardinal preservation, the statement $\lambda^\omega=\lambda^+$ is absolute between $M$, $N$ and $V$. So, using $\lambda^\omega=\lambda^+$, one can easily construct by induction such a sequence of $f_\alpha$'s. 

Without loss of generality, we may assume that each $\delta_n$ lives in the interval $(\kappa, \lambda)$. By elementarity, \begin{center}
        $N\vDash j(\langle f_\alpha\rangle_{\alpha<\lambda^+})=\langle g_\beta\rangle_{\beta<\lambda^+}$ is a scale at $\lambda$ relative to $j(\Vec{\delta})=\langle j(\delta_n)\rangle_{n<\omega}$.
    \end{center} But since $N$ is closed under countable sequences, being a scale is upwards absolute to $V$. As $j[\lambda^+]$ is cofinal in $\lambda^+$, $\langle g_\beta\rangle_{\beta\in j[\lambda^+]}$ is also a scale relative to $\langle j(\delta_n)\rangle_{n<\omega}$. Of course $g_{j(\alpha)} = j(f_\alpha)$ for all $\alpha<\lambda^+$, and so $\langle j(f_\alpha)\rangle_{\alpha<\lambda^+}$ is a scale relative to $\langle j(\delta_n)\rangle_{n<\omega}$ as well. Now we reach a contradiction following Zapeltal's proof of Kunen inconsistency \cite{MR1317054}. Let $h=\langle\sup j[\delta_n]\rangle_{n<\omega}$. Since $\lambda$ is the first fixed point above $\kappa$, $\delta_n<j(\delta_n)$. Moreover, $\delta_n$ is a successor cardinal, say $\delta_n=\nu_n^+$, and so $j(\delta_n)=(j(\nu_n)^+)^N=j(\nu_n)^+$. This means that $j(\delta_n)$ is a regular
cardinal larger than $\delta_n$, and so $\sup j[\delta_n] < j(\delta_n)$. Hence $h\in\prod_{n<\omega}j(\delta_n)$. Therefore there is $\alpha<\lambda^+$ such that $h<^\ast j(f_\alpha)$. But for all $n<\omega$, $j(f_\alpha)(n)=j(f_\alpha)(j(n))=j(f_\alpha(n))$ and since $f_\alpha(n)<\delta_n$, we have that $j(f_\alpha(n))\leq\sup j[\delta_n]=h(n)$. Therefore, for all $n$ big enough, we get \begin{center}
    $h(n)<j(f_\alpha)(n)=j(f_\alpha(n))\leq h(n)$,
\end{center} which is absurd.
\end{proof}
  All we used to get the contradiction in the proof above is the fact that the cardinal correct models $M$, $N$ and $V$ have the same $\omega$-sequences of ordinals, together with $\lambda^\omega=\lambda^+$. In \cite[Theorem 2.4]{7ae31891-508b-3b80-a3ca-95d64fa38807}, Foreman proved that if $j:M\rightarrow V$ is a nontrivial elementary embedding ($\Card^M\neq\Card$ is possible), then $^\omega M\nsubseteq M$. The argument above shows that this is also the case for embeddings of the form $j:M\rightarrow N$ that are cardinal preserving up to $\kappa_\omega(j)^+$. 
  \begin{prop}\label{cardinal correctness and countable sequences}
      Let $j:M\rightarrow N$ be a nontrivial elementary embedding between two transitive models of $\ZFC$, and let $\lambda=\kappa_\omega(j)$. Suppose $\lambda^\omega=\lambda^+$ and $\Card^M\cap\lambda^+=\Card^N\cap\lambda^+=\Card\cap\lambda^+$. Then either $j(\lambda)>\lambda$ or ${}^\omega\lambda\nsubseteq N$. In particular, ${}^\omega\lambda\nsubseteq M\cap N$.
  \end{prop}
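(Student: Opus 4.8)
The plan is to argue by contraposition: assuming both $j(\lambda)=\lambda$ and ${}^\omega\lambda\subseteq N$, I will rerun the scale argument from the previous proof and derive a contradiction. The point of the weaker hypotheses is that I may no longer invoke full countable closure of $N$; instead I must extract exactly what ${}^\omega\lambda\subseteq N$ gives and lean on the fixed-point assumption $j(\lambda)=\lambda$ to keep every relevant product below $\lambda$. I would first pin down cofinalities. Since each $\kappa_n(j)<\lambda$, the critical sequence $\langle\kappa_n(j)\rangle_{n<\omega}$ is an element of ${}^\omega\lambda$, hence lies in $N$; being cofinal in $\lambda$, it witnesses $\cof^N(\lambda)=\omega$. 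Applying $j$ and using $j(\lambda)=\lambda$ together with $\crit(j)>\omega$ (so that $\omega$ is the only ordinal $j$ sends to $\omega$), elementarity yields $\cof^M(\lambda)=\omega$. This lets me fix, in $M$, an increasing sequence $\langle\delta_n\rangle_{n<\omega}$ of successor cardinals in $(\kappa,\lambda)$ cofinal in $\lambda$, and a scale $\langle f_\alpha\rangle_{\alpha<\lambda^+}$ on $\prod_{n<\omega}\delta_n$, exactly as before, using $\lambda^\omega=\lambda^+$ to carry out the induction.

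Next I would transport the scale. By elementarity $\langle j(f_\alpha)\rangle_{\alpha<\lambda^+}$ is, in $N$, cofinal with respect to $<^\ast$ in $\prod_{n<\omega}j(\delta_n)$; here $j[\lambda^+]$ is cofinal in $\lambda^+$ because $\lambda^+$ is regular and $j\restriction\lambda^+$ is order preserving, so that $\langle j(f_\alpha)\rangle_{\alpha<\lambda^+}$ exhausts a cofinal subfamily of $j(\langle f_\alpha\rangle)$. The key new observation is that $j(\lambda)=\lambda$ forces $j(\delta_n)<\lambda$ for every $n$, whence $\prod_{n<\omega}j(\delta_n)\subseteq{}^\omega\lambda\subseteq N$; this product is therefore computed identically in $N$ and in $V$, and $\langle j(f_\alpha)\rangle_{\alpha<\lambda^+}$ remains cofinal in it as computed in $V$. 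This is precisely the role played by countable closure in the previous proof, now recovered from ${}^\omega\lambda\subseteq N$ alone. The contradiction is then reached exactly as before: setting $h=\langle\sup j[\delta_n]\rangle_{n<\omega}$, each $\delta_n=\nu_n^+$ gives $j(\delta_n)=j(\nu_n)^+$, regular in $N$ and strictly above $\delta_n$, so $\sup j[\delta_n]<j(\delta_n)$ and $h\in\prod_{n<\omega}j(\delta_n)$. Cofinality in $V$ produces $\alpha$ with $h<^\ast j(f_\alpha)$, while $j(f_\alpha)(n)=j(f_\alpha(n))\le\sup j[\delta_n]=h(n)$ for every $n$, which is absurd.

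For the final clause I would observe that ${}^\omega\lambda\subseteq M$ already forces $j(\lambda)=\lambda$: the critical sequence then lies in $M$, so $j(\lambda)=j(\sup_n\kappa_n(j))=\sup_n\kappa_{n+1}(j)=\lambda$. Hence, were ${}^\omega\lambda\subseteq M\cap N$ to hold, the first alternative of the dichotomy would fail, while the second is contradicted by ${}^\omega\lambda\subseteq N$; so no such inclusion is possible, giving ${}^\omega\lambda\nsubseteq M\cap N$.

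I expect the construction of the scale in $M$ to be the main obstacle, since cardinal arithmetic such as $\lambda^\omega=\lambda^+$ need not descend from $V$ to the inner model $M$: the set $({}^\omega\lambda)^M$ is smaller than ${}^\omega\lambda$, yet $M$ may assign it a larger cardinality for want of the bijections witnessing smallness. I would resolve this by routing the equality through $N$: by elementarity $M$ and $N$ agree on ``$\lambda^\omega=\lambda^+$'' (as $j$ fixes $\lambda$, $\lambda^+$ and $\omega$), and ${}^\omega\lambda\subseteq N$ together with cardinal correctness below $\lambda^+$ ties the computation in $N$ to that in $V$. Alternatively, and more robustly, one may bypass the hypothesis entirely by appealing to Shelah's theorem that every singular cardinal of cofinality $\omega$ carries a scale of length $\lambda^+$ in $\ZFC$; this furnishes $\langle f_\alpha\rangle_{\alpha<\lambda^+}$ in $M$ outright, leaving $\lambda^\omega=\lambda^+$ to serve only in making the construction elementary.
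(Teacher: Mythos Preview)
Your argument is correct and follows the same route as the paper, which does not give a separate proof but simply remarks that the scale argument from the preceding proposition goes through under the weaker hypotheses; you have supplied the missing details, in particular the deduction of $\cof^M(\lambda)=\omega$ from $\cof^N(\lambda)=\omega$ via elementarity and $j(\lambda)=\lambda$, and the observation that ${}^\omega\lambda\subseteq N$ suffices (in place of full countable closure) because it puts $h$ into $N$ where the transported scale is cofinal.

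One small caveat: your first proposed fix for transferring $\lambda^\omega=\lambda^+$ to $M$ via $N$ is not quite complete. Knowing $({}^\omega\lambda)^N={}^\omega\lambda$ and $\Card^N\cap\lambda^+=\Card\cap\lambda^+$ does not by itself force $|{}^\omega\lambda|^N=\lambda^+$, since $N$ may lack the relevant bijection and the cardinal agreement is only stipulated below $\lambda^+$. Your second fix, invoking Shelah's $\ZFC$ theorem that every singular cardinal of countable cofinality carries a scale of length $\lambda^+$, is the clean way out and makes the hypothesis $\lambda^\omega=\lambda^+$ unnecessary for the existence of the scale in $M$; it is then used only, as you say, to keep the construction elementary.
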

  The first author observed in \cite{goldberg2020some} that $N$ cannot be closed under $\omega$-sequences, whenever $N$ is the target model of a cardinal preserving elementary embedding whose domain is $V$. Exploiting the following result by Viale one can show that, under $\PFA$, the same conclusion holds in a more general case.  
  \begin{lem}[\cite{MR2385636}, Corollary 28]\label{PFA and cardinal correctness}
      If $\PFA$ holds and $M$ is an inner model with the same cardinals, then $M$
computes correctly all ordinals of cofinality $\omega$.
  \end{lem}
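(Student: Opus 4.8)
The plan is to prove the nontrivial inclusion, namely that every ordinal of cofinality $\omega$ in $V$ retains cofinality $\omega$ in $M$; the reverse direction is immediate, since an $\omega$-cofinal sequence witnessing $\cof^M(\delta)=\omega$ already lies in $V$. First I would reduce to cardinals. Assume $\cof^V(\delta)=\omega$ but $\theta:=\cof^M(\delta)>\omega$. Since $\theta$ is regular in $M$ and there is an increasing cofinal map $\theta\to\delta$ lying in $M\subseteq V$, we get $\cof^V(\theta)=\cof^V(\delta)=\omega$. As $\theta$ is a cardinal of $M$, it is a cardinal of $V$ by cardinal preservation; it cannot be a successor (successors are regular in $\ZFC$, contradicting $\cof^V(\theta)=\omega$), so it is a limit cardinal, and being regular in $M$ it is weakly inaccessible there, while being singular of cofinality $\omega$ in $V$. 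Hence it suffices to rule out, under $\PFA$ and $\Card^M=\Card$, a cardinal $\theta$ that is weakly inaccessible in $M$ but has $\cof^V(\theta)=\omega$.

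The engine for this is Viale's covering property, which is the heart of the cited corollary. Note that $\theta^+$ is computed the same way in $M$ and in $V$, and fix in $V$ an increasing cofinal sequence $\langle\theta_n\rangle_{n<\omega}$ of cardinals below $\theta$. Because $\theta$ is singular of cofinality $\omega$ in $V$, the successor $\theta^+$ carries covering matrices in Viale's sense: coherent families $\langle D(n,\beta):n<\omega,\ \beta<\theta^+\rangle$ with $\beta=\bigcup_n D(n,\beta)$ and $\ot(D(n,\beta))<\theta$. Viale's theorem that $\PFA$ implies the covering property $CP(\theta^+)$ --- proved through Moore's Mapping Reflection Principle --- guarantees that \emph{every} such matrix possesses an unbounded set $S\subseteq\theta^+$ that is covered, in the sense that each countable subset of $S$ is contained in a single $D(n,\beta)$. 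The contradiction comes from the other side: using that $\theta$ is weakly inaccessible in $M$, I would produce a covering matrix on $\theta^+$ admitting no covered unbounded set. The regularity of the limit cardinal $\theta$ inside $M$ is exactly the coherence that rigidifies the matrix and blocks the reflection that $CP(\theta^+)$ would force; cardinal preservation is what keeps this object a genuine covering matrix from the point of view of $V$. The clash refutes the existence of $\theta$, so $M$ computes $\omega$-cofinalities correctly.

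The main obstacle is the engine rather than the reduction. The genuinely hard input is Viale's implication $\PFA\Rightarrow CP(\theta^+)$, which runs through $\MM$- and MRP-style reflection; I would simply cite it. The second delicate point is manufacturing, from the single hypothesis that $\theta$ is weakly inaccessible in $M$, a covering matrix on $\theta^+$ with no covered unbounded set, and checking that cardinal preservation makes this matrix legitimate in $V$ so that it really contradicts $CP(\theta^+)$. Everything else --- the passage from a bad ordinal to a bad cardinal and the cofinality bookkeeping --- is routine.
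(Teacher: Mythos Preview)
The paper does not supply a proof of this lemma; it is simply quoted as Corollary~28 of Viale's paper \cite{MR2385636} and used as a black box. So there is no in-paper argument to compare against.

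That said, your sketch is faithful to Viale's actual proof. The reduction step is correct: if $\cof^V(\delta)=\omega$ but $\theta=\cof^M(\delta)>\omega$, then $\theta$ is a $V$-cardinal (by cardinal preservation) of $V$-cofinality $\omega$, hence a limit cardinal, and it is regular in $M$. The substantive step is exactly the one you isolate: from $\theta$ regular in $M$, $(\theta^+)^M=\theta^+$, and $\cof^V(\theta)=\omega$, build a covering matrix on $\theta^+$ (using, for each $\beta<\theta^+$, a surjection $f_\beta:\theta\to\beta$ lying in $M$, sliced along a $V$-cofinal $\omega$-sequence in $\theta$) for which the covering property fails; then invoke Viale's theorem that $\PFA$ implies $CP$. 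Your outline leaves this construction and the verification that it is a \emph{downward coherent} matrix with no covered unbounded set as a promise rather than an argument, but that is precisely the content of the cited corollary, and you correctly identify it as the nontrivial input. One small point: what the argument really uses is that $\theta$ is \emph{regular} in $M$ together with $(\theta^+)^M=\theta^+$; the weak inaccessibility is a byproduct of the reduction rather than an ingredient.
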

  \begin{cor}[$\PFA$]
      Let $j:M\rightarrow N$ be a cardinal preserving embedding. Then ${}^\omega\lambda\nsubseteq N$, where $\lambda=\kappa_\omega(j)$.
  \end{cor}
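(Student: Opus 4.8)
The plan is to derive this Corollary by combining Viale's result (Lemma \ref{PFA and cardinal correctness}) with Proposition \ref{cardinal correctness and countable sequences}. The strategy is to verify that all the hypotheses of Proposition \ref{cardinal correctness and countable sequences} are met under $\PFA$ together with cardinal preservation, and then to use Viale's lemma to rule out the alternative $j(\lambda)>\lambda$, so that only the desired conclusion ${}^\omega\lambda\nsubseteq N$ survives.

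First I would set $\lambda=\kappa_\omega(j)$ and observe that $\lambda$ has cofinality $\omega$ by construction, being the supremum of the critical sequence $\langle\kappa_n(j)\rangle_{n<\omega}$. Since $j$ is cardinal preserving, $M$, $N$ and $V$ share the same cardinals, so in particular the cardinal-agreement hypothesis $\Card^M\cap\lambda^+=\Card^N\cap\lambda^+=\Card\cap\lambda^+$ of Proposition \ref{cardinal correctness and countable sequences} holds trivially. Next I would verify $\lambda^\omega=\lambda^+$: under $\PFA$ one has $\SCH$, and as in the earlier Proposition, a singular cardinal of cofinality $\omega$ satisfies $\lambda^\omega=\lambda^+$ (this is where the cardinal arithmetic consequence of $\PFA$ is needed). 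These checks license an application of Proposition \ref{cardinal correctness and countable sequences}, whose conclusion is the disjunction $j(\lambda)>\lambda$ \emph{or} ${}^\omega\lambda\nsubseteq N$.

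The remaining and key step is to eliminate the first disjunct. Here I would invoke Lemma \ref{PFA and cardinal correctness}: since $M$ is an inner model with the same cardinals as $V$ and $\PFA$ holds, $M$ computes correctly all ordinals of cofinality $\omega$. As $\lambda=\kappa_\omega(j)$ is an ordinal of cofinality $\omega$, it is computed correctly in $M$, meaning it is genuinely the supremum of the critical sequence with $\cof^M(\lambda)=\omega$ witnessed by a cofinal $\omega$-sequence lying in $M$. Applying $j$ and using elementarity gives $j(\lambda)=j(\sup_{n<\omega}\kappa_n(j))=\sup_{n<\omega}\kappa_{n+1}(j)=\lambda$, exactly as in the proof of the earlier Proposition, so the critical sequence being in $M$ forces $\lambda$ to be a fixed point of $j$. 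This contradicts the first disjunct $j(\lambda)>\lambda$.

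With $j(\lambda)>\lambda$ ruled out, Proposition \ref{cardinal correctness and countable sequences} leaves only ${}^\omega\lambda\nsubseteq N$, which is the desired conclusion. The main obstacle I anticipate is the correct invocation of Viale's lemma to guarantee that the critical sequence, or at least some $\omega$-cofinal sequence converging to $\lambda$, actually belongs to $M$; this is precisely what forces $j(\lambda)=\lambda$ and thereby closes off the fixed-point alternative. Everything else is a matter of checking that cardinal preservation supplies the cardinal-agreement hypothesis and that $\PFA$-driven cardinal arithmetic supplies $\lambda^\omega=\lambda^+$.
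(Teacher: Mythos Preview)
Your proposal is correct and follows essentially the same approach as the paper: invoke Viale's lemma to obtain $\cof^M(\lambda)=\omega$, deduce $j(\lambda)=\lambda$, and then apply Proposition~\ref{cardinal correctness and countable sequences} to conclude ${}^\omega\lambda\nsubseteq N$. One small point of care: Viale's lemma only hands you \emph{some} cofinal $\omega$-sequence in $M$, not the critical sequence itself, but as you note at the end this (together with $j[\lambda]\subseteq\lambda$) is all that is needed to force $j(\lambda)=\lambda$.
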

  \begin{proof}
      By Lemma \ref{PFA and cardinal correctness}, $\cof^M(\lambda)=\omega$ and so $j(\lambda)=\lambda$. Hence ${}^\omega\lambda\nsubseteq N$, by Proposition \ref{cardinal correctness and countable sequences}.
  \end{proof}
\section{Singular cardinal combinatorics}

The proof of nonexistence of $j:M\rightarrow V$ with $\Card^M=\Card$ involves singular cardinal combinatorics. More specifically, it relies on some results concerning square principles (Magidor-Sinapova \cite{MR3692010}), good scales, J\'{o}nsson cardinals (Shelah \cite{Sh:g}), $\omega_1$-strongly compact cardinals (Bagaria-Magidor \cite{a4c6f821-6ac9-3a48-a9eb-4ca5d88ea129}) and basic facts from Shelah's pcf theory. 

Accordingly, with a view to the proof of Theorem \ref{main thm} below, we collect some definitions and results, with the further intent of fixing the notation. In the following, unless stated otherwise, all embeddings we will consider are supposed to be nontrivial.
\paragraph{Weak forms of square.} The notion of square principle, denoted $\Box_\kappa$, is a central concept in Jensen's \cite{JENSEN1972229} fine structure analysis
of $L$. For a cardinal $\kappa$, $\Box_\kappa$ states that there exists a sequence $\langle C_\alpha:\alpha<\kappa^+\rangle$
such that each $C_\alpha$ is a club subset of $\alpha$, $\ot(C_\alpha)\leq\kappa$, and if $\delta\in\lim C_\alpha$,
then $C_\alpha \cap \delta = C_\delta$. In his study of 
core models for Woodin 
cardinals, Schimmerling \cite{SCHIMMERLING1995153} isolated a spectrum of square principles $\Box_{\lambda,\kappa}$, for $1\leq\lambda\leq\kappa$, 
 that form a natural hierarchy below $\Box_\kappa$. We are interested in a specific weakening of $\Box_\kappa$ that allows at most countably many guesses for the clubs at each point.
\begin{defin}
    Let $\kappa$ be a cardinal. The principle $\Box_{\kappa,\omega}$ asserts that there exists a sequence $\langle\mathcal{C}_\alpha:\alpha<\kappa^+\rangle$ such that, for all $\alpha<\kappa^+$, \begin{enumerate}
    \item $1\leq |\mathcal{C}_\alpha|\leq\omega$, 
    \item every $C\in\mathcal{C}_\alpha$ is a club subset of $\alpha$ with $\ot(C)\leq\kappa$, and
    \item if $C\in\mathcal{C}_\alpha$ and $\delta\in\lim(C)$, then $C\cap\delta\in\mathcal{C}_\delta$.
\end{enumerate}
\end{defin}
Cummings and Schimmerling \cite{MR1942302} showed that after Prikry forcing at $\kappa$, $\Box_{\kappa,\omega}$ holds in the generic extension. Magidor and Sinapova \cite{MR3692010} observed that arguments in Gitik \cite{MR1469092} and independently in D\'{z}amonja-Shelah \cite{MR1360144} yield a more general result for $\Box_{\kappa,\omega}$:
\begin{lem}\label{weak square in the outer model}
     Let $V\subseteq W$ be transitive class models of $\ZFC$. Suppose that $\kappa$ is an inaccessible cardinal in $V$, singular of countable cofinality in $W$,
and $(\kappa^+)^V=(\kappa^+)^W$. Then $W\vDash\Box_{\kappa,\omega}$.
\end{lem}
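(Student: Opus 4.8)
The plan is to build a $\Box_{\kappa,\omega}$-sequence in $W$ by collapsing the clubs that $V$ provides at the inaccessible $\kappa$. The key point is that $V$ has an honest $\Box_\kappa$-style object for free: since $\kappa$ is inaccessible in $V$, for each $\alpha<(\kappa^+)^V$ we may fix (in $V$) a single club $D_\alpha\subseteq\alpha$ with $\ot(D_\alpha)\leq\kappa$ that coheres, i.e.\ $D_\alpha\cap\delta=D_\delta$ whenever $\delta\in\lim(D_\alpha)$. Such a coherent sequence is available in $V$ because $\kappa$ inaccessible lets one canonically choose clubs (e.g.\ via a fixed sequence of order-isomorphisms, or simply because $\Box_\kappa$ can be arranged, but in fact one can just take $D_\alpha$ to be the range of the canonical Skolem-hull/continuous increasing enumeration so that coherence holds). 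The crucial hypothesis $(\kappa^+)^V=(\kappa^+)^W$ means the index set $\kappa^+$ of this sequence is the \emph{same} ordinal in both models, so the whole sequence $\langle D_\alpha:\alpha<\kappa^+\rangle$ lies in $W$ and still indexes all ordinals below $(\kappa^+)^W$.

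Next I would address the defect that in $W$ the ordinals $D_\alpha$ may have order type up to $\kappa$, which is fine, but $\kappa$ is now \emph{singular} of cofinality $\omega$ in $W$, so the cardinals lying inside $\kappa$ have changed. Fix in $W$ an increasing cofinal sequence $\langle\kappa_n:n<\omega\rangle$ with supremum $\kappa$. The idea is to define $\mathcal{C}_\alpha$ in $W$ from $D_\alpha$ by chopping it at the levels dictated by the $\kappa_n$. Concretely, for each $\alpha<(\kappa^+)^W$ set, for every $n$, the club $C_\alpha^n$ to be the initial segment of $D_\alpha$ of order type below $\kappa_n$ together with its limit points, or more precisely take $C_\alpha^n$ to be $D_\alpha$ when $\ot(D_\alpha)<\kappa_n$ and an appropriate cofinal club of order type $<\kappa$ otherwise; then let $\mathcal{C}_\alpha=\{C_\alpha^n:n<\omega\}$. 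This yields at most countably many clubs at each $\alpha$, each of order type $<\kappa$, so conditions (1) and (2) of $\Box_{\kappa,\omega}$ hold. The coherence condition (3) is inherited from the coherence of $\langle D_\alpha\rangle$: if $C\in\mathcal{C}_\alpha$ and $\delta\in\lim(C)$, then $C\cap\delta$ is an initial segment of $D_\alpha\cap\delta=D_\delta$, hence belongs to $\mathcal{C}_\delta$ by our uniform definition.

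The main obstacle, and the reason the statement credits Gitik and D\'zamonja--Shelah rather than being a one-line observation, is making the chopping genuinely coherent when $D_\alpha$ has order type exactly $\kappa$: a naive truncation of $D_\alpha$ at height $\kappa_n$ need not land on a point of $D_\alpha$, and the resulting clubs at $\alpha$ and at a limit $\delta$ must be cut \emph{at the same places} for (3) to survive. The delicate work is to fix, uniformly across all $\alpha$, a system of $\omega$-many ``cut points'' depending only on the order type of the initial segment (not on $\alpha$), so that truncation commutes with the coherence map $D_\alpha\mapsto D_\alpha\cap\delta$. This is exactly where the hypotheses that $\kappa$ is inaccessible in $V$ (giving a robust supply of coherent clubs of every order type $<\kappa$) and singular of cofinality $\omega$ in $W$ (supplying the $\langle\kappa_n\rangle$ along which to cut) combine; I would invoke the cited arguments to carry out this bookkeeping. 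Once the cut points are fixed uniformly, verifying (1)--(3) is routine, and $\Box_{\kappa,\omega}$ holds in $W$ as required.
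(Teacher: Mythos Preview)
The paper does not prove this lemma; it simply attributes it to Magidor--Sinapova, who credit arguments of Gitik and D\'{z}amonja--Shelah. So there is no in-paper proof to compare against beyond the citation.

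Your sketch, however, contains a genuine error at the first step. You assert that because $\kappa$ is inaccessible in $V$, one may fix in $V$ a \emph{coherent} sequence $\langle D_\alpha:\alpha<\kappa^+\rangle$ of clubs with $\ot(D_\alpha)\leq\kappa$---that is, that $\Box_\kappa$ holds in $V$. This is false in general: inaccessibility of $\kappa$ does not imply $\Box_\kappa$ (it fails, for instance, if $\kappa$ is subcompact, and can be forced to fail at any inaccessible), and none of your proposed constructions (order-isomorphisms, Skolem hulls, ``canonical continuous enumerations'') produces coherence. Worse, if $\Box_\kappa$ \emph{did} hold in $V$, your chopping procedure would be superfluous: the very same sequence would witness $\Box_\kappa$, hence $\Box_{\kappa,\omega}$, in $W$ directly, since coherence and the order-type bound are absolute and $(\kappa^+)^V=(\kappa^+)^W$.

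The actual argument runs in the opposite direction. What $V$ provides for free (from regularity of $\kappa$ alone) is a \emph{non-coherent} sequence of clubs $C_\alpha\subseteq\alpha$ with $\ot(C_\alpha)\leq\kappa$. The substance of the Gitik and D\'{z}amonja--Shelah arguments is to use the new cofinal $\omega$-sequence $\langle\kappa_n\rangle$ available in $W$ to \emph{manufacture} coherence, building by an inductive construction a countable family of clubs at each $\alpha$ that agree on limit points. You have the roles reversed: you take coherence as input from $V$ and treat the $\omega$-sequence as a mere truncation device, whereas in fact coherence is the output and the $\omega$-sequence is the essential new ingredient that makes the construction go through. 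Your acknowledgment that ``the delicate work'' lies in the cited papers is correct, but you have misidentified what that work is.
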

There is a connection between this square principle and a pcf-theoretic object called \emph{good scale}.
\begin{defin}\label{scale, good scale}
    Suppose $\langle\kappa_n\rangle_{n<\omega}$ is an increasing sequence of regular cardinals with $\sup_{n<\omega}\kappa_n=\lambda$. Let $\Vec{f}=\langle f_\alpha\rangle_{\alpha<\lambda^+}$ be a sequence of functions from $\prod_{n<\omega}\kappa_n$. The sequence $\Vec{f}$ is a \emph{good scale at} $\lambda$ if it is a scale relative to the sequence $\langle\kappa_n\rangle_{n<\omega}$, and for all $\nu<\lambda^+$ with $\cof(\nu)>\omega$, there is a cofinal subset $C$ of $\nu$ such that, for some $n<\omega$, and for all $k\geq n$, the sequence $\langle f_\alpha(k):\alpha\in C\rangle$ is strictly increasing.
\end{defin}

\begin{lem}[\cite{Cummings2001-CUMSSA}, Theorem 3.1]\label{good scales with square}
    Let $\lambda$ be a singular cardinal. Then $\Box_{\lambda,\omega}$ implies that there is a good scale at $\lambda$.
\end{lem}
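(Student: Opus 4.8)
The statement tacitly assumes $\cof(\lambda)=\omega$, since a good scale is indexed by an $\omega$-sequence $\langle\kappa_n\rangle_{n<\omega}$ cofinal in $\lambda$ (Definition \ref{scale, good scale}). The plan is to produce a scale that is good at \emph{every} $\nu<\lambda^+$ of uncountable cofinality, the genuine content being a single finite threshold $n$ that works uniformly along a cofinal subset of $\nu$. First I would invoke Shelah's pcf theory: since $\cof(\lambda)=\omega$, there is an increasing sequence $\langle\lambda_n\rangle_{n<\omega}$ of regular cardinals cofinal in $\lambda$ carrying a scale $\langle g_\xi\rangle_{\xi<\lambda^+}$ in $\prod_{n<\omega}\lambda_n$. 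This secures existence and cofinality; the square sequence will then be used to construct a new, good scale $\langle f_\alpha\rangle_{\alpha<\lambda^+}$.

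Fix a $\Box_{\lambda,\omega}$-sequence $\langle\mathcal{C}_\alpha:\alpha<\lambda^+\rangle$ and define $\langle f_\alpha\rangle$ by recursion. At successor stages and at limits of cofinality $\omega$, choose $f_\alpha$ to be $<^\ast$-above all earlier $f_\beta$ and above $g_\alpha$, so as to preserve cofinality. At a limit $\delta$ with $\cof(\delta)>\omega$, set
\[
f_\delta(k)=\sup\{\,f_\gamma(k):\gamma\in C,\ C\in\mathcal{C}_\delta,\ \ot(C)<\lambda_k\,\}.
\]
The restriction $\ot(C)<\lambda_k$ matters: since $\cof(\delta)>\omega$ while $\cof(\lambda)=\omega$, every $C\in\mathcal{C}_\delta$ has $\ot(C)<\lambda$, so each $C$ is admitted for all large $k$; and for a fixed $k$ the supremum ranges over a set of size $<\lambda_k$ of ordinals $<\lambda_k$, whence $f_\delta(k)<\lambda_k$ by regularity. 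A routine check shows $\langle f_\alpha\rangle$ is $<^\ast$-increasing and, eventually dominating $\langle g_\xi\rangle$, cofinal: it is a scale.

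The goodness at a point $\nu$ with $\cof(\nu)>\omega$ is where the coherence clause of $\Box_{\lambda,\omega}$ does the work. Pick $C\in\mathcal{C}_\nu$, let $n$ be least with $\lambda_n>\ot(C)$, and take as witnessing cofinal set $C'=\lim(C)\cap\nu$, which is club in $\nu$ (as $\cof(\nu)>\omega$) and contained in $C$. For $\alpha<\beta$ in $C'$, coherence gives $C\cap\beta\in\mathcal{C}_\beta$ with $\ot(C\cap\beta)\le\ot(C)<\lambda_k$ for all $k\ge n$, so $C\cap\beta$ is admitted into the supremum defining $f_\beta(k)$; since $\alpha\in C\cap\beta$ and this segment is cofinal in $\beta$ with no maximum, $f_\beta(k)\ge\sup_{\gamma\in C\cap\beta}f_\gamma(k)>f_\alpha(k)$. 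Thus for every $k\ge n$ the sequence $\langle f_\alpha(k):\alpha\in C'\rangle$ is strictly increasing, which is exactly goodness at $\nu$.

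The main obstacle is precisely securing the uniform threshold. For an arbitrary scale the thresholds witnessing $f_\alpha<^\ast f_\beta$ along a cofinal set can be unbounded in $\omega$; the countable, coherent guessing of $\Box_{\lambda,\omega}$ is what allows the recursion to define $f_\delta$ as an honest coordinatewise supremum over clubs all of whose initial segments are already recorded in the square below $\delta$, and passing from $C$ to its limit points $C'$ converts this into one threshold valid simultaneously for all pairs. The delicate bookkeeping is to carry out this exact-upper-bound construction at uncountable-cofinality stages while, at the remaining stages, diagonalizing against $\langle g_\xi\rangle$ so that the resulting sequence stays cofinal and hence remains a scale.
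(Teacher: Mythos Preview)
The paper does not prove this lemma; it simply cites Cummings--Foreman--Magidor. Your sketch follows that standard construction, and the overall shape is right, but there is a genuine inconsistency between your recursion and your verification of goodness. You define $f_\delta$ via the supremum formula only at limits of \emph{uncountable} cofinality, while at limits of countable cofinality you merely choose $f_\delta$ to be $<^\ast$-above earlier terms and above $g_\delta$. Yet in the goodness argument you pass to $C'=\lim(C)$ and, for $\beta\in C'$, invoke the supremum description of $f_\beta$ to get $f_\beta(k)\ge\sup_{\gamma\in C\cap\beta}f_\gamma(k)$. But $C'$ will typically contain many $\beta$ with $\cof(\beta)=\omega$ (e.g.\ when $\cof(\nu)=\omega_1$ and $C$ has ordertype $\omega_1$), and for such $\beta$ your recursion gives you no control over $f_\beta(k)$ relative to $\sup_{\gamma\in C\cap\beta}f_\gamma(k)$. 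The fix is simple and is what the cited proof actually does: use the supremum formula at \emph{all} limit ordinals (cofinality of the resulting sequence is already secured at successor stages via the $g_\alpha$'s).

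A second, smaller gap: even where the supremum formula applies, the step ``$\sup_{\gamma\in C\cap\beta}f_\gamma(k)>f_\alpha(k)$'' does not follow just from $C\cap\beta$ having no maximum; you only get $\ge$ since $f_\alpha(k)$ is one of the terms in the supremum and you have not yet established strict monotonicity at coordinate $k$ along $C\cap\beta$. The standard remedy is to build a ``$+1$'' into the definition of $f_\delta(k)$ (take $\sup\{f_\gamma(k)+1:\dots\}$), which immediately yields the strict increase required by Definition~\ref{scale, good scale}.
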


\paragraph{J\'{o}nsson cardinals.} Another crucial role for our argument will be played by J\'{o}nsson cardinals, and their influence on the pcf structure: the presence of successor J\'{o}nsson cardinals implies the failure of $\SSH$, which in turn ensures the existence of good scales.
\begin{defin}
\begin{enumerate}
    \item Suppose that $\lambda$ is a singular cardinal, and let $\PP(\lambda)$ be
the set of all cardinals of the form $\cof(\prod A/D)$, where $A$ is a set of regular
cardinals cofinal in $\lambda$ of order-type $\cof(\lambda)$ and $D$ is an ultrafilter on $A$ containing no bounded subsets. We define the \emph{pseudopower of} $\lambda$, denoted $\pp(\lambda)$, as the supremum of $\PP(\lambda)$. In symbols, \begin{center}
    $\pp(\lambda)=\sup\PP(\lambda)$.
\end{center}
\item  \emph{Shelah's Strong Hypothesis}, denoted by $\SSH$, states that $\pp(\lambda) = \lambda^+$ for every singular cardinal $\lambda$.
\end{enumerate}
\end{defin}
\begin{defin}
    A cardinal $\kappa$ is called \emph{J\'{o}nsson} if every first-order structure of cardinality $\kappa$ 
whose language is countable possesses a \emph{J\'{o}nsson substructure}, i.e. a proper 
elementary substructure of the same cardinality. 
\end{defin}
The following characterization of J\'{o}nsson cardinals in terms of elementary embeddings will be useful.
\begin{prop}[\cite{MR0788256}, Lemma 1]\label{Jonnsoness}
    Let $\kappa$ be an  uncountable cardinal. Then the following are equivalent:
    \begin{enumerate}
        \item $\kappa$ is J\'{o}nsson.
        \item For some $\alpha>\kappa$, there is an elementary embedding $j:N\rightarrow V_\alpha$ such that $N$ is a transitive set, $j(\kappa)=\kappa$ and $\crit(j)<\kappa$.
    \end{enumerate}
\end{prop}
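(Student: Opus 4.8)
The plan is to set up, via the Mostowski collapse, a correspondence between embeddings $j:N\to V_\alpha$ of the type in (2) and elementary submodels $M\prec V_\alpha$ of size $\kappa$ satisfying $\kappa\in M$, $\kappa\nsubseteq M$ and $\ot(M\cap\kappa)=\kappa$; the Jónsson property is exactly what manufactures such an $M$. Concretely, if $M\prec(V_\alpha,\in)$ is such a submodel and $\pi:M\to N$ is its transitive collapse, then $j:=\pi^{-1}:N\to V_\alpha$ is elementary and $N$ is a transitive set. The condition $\ot(M\cap\kappa)=\kappa$ forces $\pi(\kappa)=\kappa$, hence $j(\kappa)=\kappa$, while $\kappa\nsubseteq M$ means some ordinal below $\kappa$ is omitted by $M$, and the least such ordinal $\delta$ is precisely $\crit(j)<\kappa$ (the collapse fixes everything below $\delta$ and sends the least element of $M$ above $\delta$ down to $\delta$). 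Conversely, any $j$ as in (2) arises in this way from $M:=\ran(j)$.

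For the direction (1)$\Rightarrow$(2), I would fix $\alpha>\kappa$ with $V_\alpha$ closed enough (say $\alpha$ inaccessible, or a limit in which the relevant bijections exist) and, by downward L\"owenheim--Skolem, choose $M_0\prec(V_\alpha,\in)$ with $\kappa\cup\{\kappa\}\subseteq M_0$ and $|M_0|=\kappa$. The key device is to fix a bijection $b:\kappa\to M_0$ and apply the Jónsson property to the expanded structure $(M_0,\in,\kappa,b)$, which has universe of size $\kappa$ and a finite (hence countable) language. This yields a proper elementary substructure $M\subsetneq M_0$ with $|M|=\kappa$ and $\kappa\in M$. Closure under $b$ now delivers the two properties: since $M\neq M_0$ and $b[\kappa]=M_0$, we cannot have $\kappa\subseteq M$ (else $M\supseteq b[\kappa]=M_0$), so $\kappa\nsubseteq M$; and for each $m\in M$ the unique $\xi<\kappa$ with $b(\xi)=m$ lies in $M\cap\kappa$, so $b^{-1}$ injects $M$ into $M\cap\kappa$ and $|M\cap\kappa|=\kappa$. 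As any subset of $\kappa$ of size $\kappa$ has order type $\kappa$, this gives $\ot(M\cap\kappa)=\kappa$, and collapsing $M$ produces the desired $j$.

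For the direction (2)$\Rightarrow$(1), I would begin with $j:N\to V_\alpha$ and set $X:=\ran(j)\prec V_\alpha$. From $j(\kappa)=\kappa$ and $\crit(j)=\delta<\kappa$ one checks $X\cap\kappa=j[\kappa]$: indeed $\kappa\subseteq N$, and $j(x)<\kappa=j(\kappa)$ forces $x<\kappa$. Since $j\!\restriction\!\kappa$ is order-preserving, $j[\kappa]$ has order type $\kappa$, and since nothing is sent to $\delta$ it is a proper subset of $\kappa$ of size $\kappa$. Crucially, $j[\kappa]$ is closed under every operation belonging to $\ran(j)$: if an operation $f$ equals $j(\bar f)$, then $f(j(\vec\gamma))=j(\bar f(\vec\gamma))\in j[\kappa]$. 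Hence for any structure $\mathfrak{A}$ with universe $\kappa$ lying in $X$, the Tarski--Vaught criterion shows that $j[\kappa]=X\cap\kappa$ is the universe of a proper elementary substructure of $\mathfrak{A}$ of size $\kappa$, witnessing the Jónsson property for that $\mathfrak{A}$.

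The main obstacle is twofold and lives at the two places where the cardinal $\kappa$ must be matched. In (1)$\Rightarrow$(2) the delicate point is securing $j(\kappa)=\kappa$ rather than merely $j(\bar\kappa)=\kappa$ for some collapsed $\bar\kappa<\kappa$; this is exactly why the order type of $M\cap\kappa$ must be full, and the bijection $b$ is introduced precisely to force $|M\cap\kappa|=\kappa$. In (2)$\Rightarrow$(1) the corresponding difficulty is that an arbitrary structure $\mathfrak{A}$ on $\kappa$ need not lie in $\ran(j)$, so one must first bring $\mathfrak{A}$ into the range; I would handle this through the routine parametrized form of the characterization (for each $a\in V_{\kappa+\omega}$ one may take the embedding with $a\in\ran(j)$), or equivalently by passing to the Jónsson partition relation $\kappa\to[\kappa]^{<\omega}_\kappa$, which is structure-independent. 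I expect this reflection of an arbitrary structure into the range of a suitable embedding to be the step demanding the most care.
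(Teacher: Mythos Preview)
The paper does not prove this proposition: it is quoted from Tryba with a citation and used as a black box, so there is no in-paper argument to compare against. Your outline is the standard proof and is correct.

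On the difficulty you flag in (2)$\Rightarrow$(1): the second route you propose---passing to the partition relation $\kappa\to[\kappa]^{<\omega}_\kappa$---already works with the statement exactly as written, for any $\alpha>\kappa$, so no parametrized strengthening of (2) is needed. The point is that since $\kappa$ is an infinite cardinal, every $f:[\kappa]^{<\omega}\to\kappa$, every $H\subseteq\kappa$, and every bijection $H\to\kappa$ is a subset of $V_\kappa$ and hence an element of $V_{\kappa+1}\subseteq V_\alpha$. Thus ``there is $f:[\kappa]^{<\omega}\to\kappa$ whose image on $[H]^{<\omega}$ is all of $\kappa$ for every $H\in[\kappa]^\kappa$'' is first-order over $(V_\alpha,\in)$ in the parameter $\kappa=j(\kappa)$. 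If $\kappa$ were not J\'onsson this would hold in $V_\alpha$, elementarity would yield $\bar f\in N$ with $j(\bar f)$ enjoying the same property, and then $j[\kappa]\in[\kappa]^\kappa$ is closed under $j(\bar f)$ yet proper in $\kappa$---a contradiction. Your (1)$\Rightarrow$(2) argument via the bijection $b$ and the Mostowski collapse is exactly right.
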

Erd\H{o}s and Hajnal \cite{MR0209161} proved that, under $\GCH$, a successor cardinal cannot be
J\'{o}nsson. Tryba showed in $\ZFC$ alone a special case of this result.
\begin{teo}[\cite{MR0788256}, Theorem 2]\label{succ of reg not Jon}
    If a regular cardinal $\kappa$ is J\'{o}nsson, then every
stationary $S \subseteq \kappa$ reflects.
    In particular, if $\kappa$ is a regular cardinal, then $\kappa^+$ is not J\'{o}nsson.
\end{teo}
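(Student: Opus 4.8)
The plan is to prove the contrapositive of the first assertion: if $\kappa$ is regular and carries a \emph{non-reflecting} stationary set $S\subseteq\kappa$ (one for which $S\cap\alpha$ is nonstationary in $\alpha$ for every $\alpha<\kappa$ with $\cof(\alpha)>\omega$), then $\kappa$ is not J\'onsson. By the definition of J\'onssonness it suffices to exhibit a single \emph{strong coloring}, i.e.\ a map $c:[\kappa]^2\to\kappa$ with $c''[A]^2=\kappa$ for every $A\in[\kappa]^{\kappa}$: the algebra $(\kappa,c)$ then has no proper elementary substructure of size $\kappa$, since any such substructure is closed under $c$ and hence contains $c''[A]^2=\kappa$. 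Equivalently, by Proposition \ref{Jonnsoness}, the existence of $c$ rules out any elementary $j:N\to V_\alpha$ with $N$ transitive, $\crit(j)<\kappa$ and $j(\kappa)=\kappa$, because closing $\ran(j)$ under $c$ would force it to contain all of $\kappa$. The second assertion will follow by applying the first to the regular cardinal $\kappa^+$.

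It is worth noting why the mere ladder system cannot be fed directly into an embedding. If one had $j:N\to V_\alpha$ with $\mu=\crit(j)<\kappa$ and $j(\kappa)=\kappa$ and pulled back the club sequence witnessing non-reflection, one checks that $S\cap\gamma$ stays nonstationary at every $\gamma\in\ran(j)$ of uncountable cofinality and at $\mu$ itself, so the embedding is by itself compatible with non-reflection; the extra strength must be manufactured combinatorially. First I would, by passing to a stationary subset, assume $S$ concentrates on a single cofinality, and fix a $C$-sequence $\langle C_\alpha:\alpha<\kappa\rangle$ (with $C_\alpha$ club in $\alpha$ for limit $\alpha$) chosen so that $C_\alpha\cap S=\emptyset$ whenever $\alpha\notin S$. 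Non-reflection is exactly what permits the clubs to avoid $S$ at points of uncountable cofinality, while regularity of $\kappa$ keeps the recursions that follow bounded below $\kappa$.

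The heart of the argument is then a minimal-walks analysis along this $C$-sequence in the style of Todorcevi\'c and Shelah. I would define the trace of the walk together with the associated characteristic $\rho:[\kappa]^2\to\kappa$, from which the desired $c$ is read off, and prove the strong-coloring property: for every $A\in[\kappa]^{\kappa}$ one has $c''[A]^2=\kappa$. The key step is a Fodor-style pressing-down argument: the set of closure points of $A$ lying in $S$ is stationary, and since every $C_\beta$ with $\beta\notin S$ avoids $S$, a walk from a large element of $A$ down to such a closure point is forced through a controlled, definable sequence of ordinals, from which an arbitrary prescribed value can be recovered. \textbf{This strong-coloring construction is the main obstacle:} converting the purely negative datum ``$S$ does not reflect'' into the positive object $c$ requires the walks machinery, and the verification that $c''[A]^2=\kappa$ for all $A\in[\kappa]^{\kappa}$ is where essentially all the work lies.

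Finally, for the displayed consequence I would apply the first part to the regular cardinal $\kappa^+$. The set $E^{\kappa^+}_\kappa=\{\alpha<\kappa^+:\cof(\alpha)=\kappa\}$ is stationary in $\kappa^+$, and it is non-reflecting: any $\beta<\kappa^+$ has $\cof(\beta)\le\kappa$, and taking a continuous increasing cofinal map $c:\cof(\beta)\to\beta$, the set of its limit-indexed values $\{c(\xi):\xi<\cof(\beta)\text{ a limit}\}$ is a club in $\beta$ whose points all have cofinality $\cof(\xi)<\kappa$, hence a club disjoint from $E^{\kappa^+}_\kappa$. Thus $E^{\kappa^+}_\kappa\cap\beta$ is nonstationary for every $\beta<\kappa^+$, so $E^{\kappa^+}_\kappa$ witnesses the failure of stationary reflection at $\kappa^+$. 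Were $\kappa^+$ J\'onsson, the first part would force every stationary subset of $\kappa^+$ to reflect, a contradiction; hence $\kappa^+$ is not J\'onsson.
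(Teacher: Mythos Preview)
The paper does not supply a proof of this theorem; it is quoted as Tryba's result \cite[Theorem~2]{MR0788256} and used as a black box. So there is no ``paper's own proof'' to compare against, and your proposal should be read simply as an independent proof sketch.

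As a sketch it is sound. The reduction to producing a strong coloring $c:[\kappa]^2\to\kappa$ with $c''[A]^2=\kappa$ for all $A\in[\kappa]^\kappa$ is the standard way of ruling out J\'onssonness, and the fact that a nonreflecting stationary set $S\subseteq\kappa$ yields such a coloring via a $C$-sequence avoiding $S$ and Todorcevi\'c's minimal-walks characteristics is a well-known theorem. You are right to flag the verification of the strong-coloring property as the substantive step; everything else in your outline is routine. Your derivation of the ``in particular'' clause from the first assertion, using that $E^{\kappa^+}_\kappa$ is a nonreflecting stationary subset of $\kappa^+$ when $\kappa$ is regular, is correct and is exactly how the paper intends that implication to be read.

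One historical remark: Tryba's original argument does not go through the full walks machinery but instead builds a J\'onsson function directly from the club system witnessing nonreflection; the Todorcevi\'c approach you outline is a later, cleaner route to the same conclusion. Either is acceptable here, and since the paper only invokes the statement, your choice of method is immaterial to the rest of the argument.
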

Proposition \ref{Jonnsoness} and Theorem \ref{succ of reg not Jon} provide a new proof of the result by Kunen quoted in the Introduction.
\begin{teo}[Kunen]
    There is no elementary embedding from the universe to itself.
\end{teo}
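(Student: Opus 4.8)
The plan is to assume, toward a contradiction, that $j:V\rightarrow V$ is a nontrivial elementary embedding and to manufacture out of it a Jónsson cardinal of the one shape that Theorem \ref{succ of reg not Jon} forbids, namely the successor of a regular cardinal. Set $\kappa=\crit(j)$, let $\langle\kappa_n\rangle_{n<\omega}$ be the critical sequence, and put $\lambda=\kappa_\omega(j)$. Since the critical sequence is strictly increasing with supremum $\lambda$, the ordinal $\lambda$ has cofinality $\omega$, and as $\lambda\geq\kappa>\omega$ it is singular. First I would verify that $j(\lambda)=\lambda$. Coding the critical sequence by the function $f:\omega\rightarrow\lambda$ with $f(n)=\kappa_n$, elementarity sends the true statement ``$\lambda$ is the supremum of the range of $f$'' to ``$j(\lambda)$ is the supremum of the range of $j(f)$''; since $j(f)(n)=j(\kappa_n)=\kappa_{n+1}$, the range of $j(f)$ is again cofinal in $\lambda$, whence $j(\lambda)=\lambda$.

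The decisive point is the \emph{choice of target}: because $\lambda$ is singular, $\lambda^{+}$ is the successor of a singular cardinal and is therefore useless for Theorem \ref{succ of reg not Jon}, so I would instead climb one cardinal higher to $\lambda^{++}$. As $\lambda^{+}$ and $\lambda^{++}$ are definable from $\lambda$ (as its first and second cardinal successors) and $j$ fixes $\lambda$, elementarity gives $j(\lambda^{+})=(j(\lambda))^{+}=\lambda^{+}$ and then $j(\lambda^{++})=\lambda^{++}$. Now set $\theta=\lambda^{++}+1$; then $j(\theta)=j(\lambda^{++})+1=\theta$, so $\theta$ is also fixed. Consider the restriction $k=j\restriction V_\theta$. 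A routine absoluteness argument — applying the elementarity of $j$ to the definable satisfaction relation for the set models $V_\gamma$ — shows that $k:V_\theta\rightarrow V_{j(\theta)}=V_\theta$ is elementary. By construction $V_\theta$ is a transitive set, $\crit(k)=\kappa<\lambda^{++}$, and $k(\lambda^{++})=\lambda^{++}$.

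These are exactly the hypotheses of clause (2) of Proposition \ref{Jonnsoness}, taken with Jónsson candidate $\lambda^{++}$ and $\alpha=\theta>\lambda^{++}$, so $\lambda^{++}$ is Jónsson. But $\lambda^{++}=(\lambda^{+})^{+}$ is the successor of $\lambda^{+}$, and $\lambda^{+}$, being a successor cardinal, is regular. Theorem \ref{succ of reg not Jon} then asserts that $(\lambda^{+})^{+}=\lambda^{++}$ is \emph{not} Jónsson, which is the desired contradiction; hence no nontrivial $j:V\rightarrow V$ exists.

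I expect the two absoluteness checks — that $j$ is continuous at $\lambda$ (yielding $j(\lambda)=\lambda$) and that $k=j\restriction V_\theta$ is genuinely elementary into $V_{j(\theta)}$ rather than a mere set-map — to be the only technical points, and both are standard. The real crux is conceptual rather than computational: among the cardinals the fixed points of $j$ are $\lambda,\lambda^{+},\lambda^{++},\dots$, and only from $\lambda^{++}$ onward does one obtain a fixed cardinal that is the successor of a \emph{regular} cardinal, which is precisely what Theorem \ref{succ of reg not Jon} needs.
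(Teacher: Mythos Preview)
Your proposal is correct and follows essentially the same route as the paper: show that $j$ fixes $\lambda=\kappa_\omega(j)$ and hence $\lambda^{++}$, restrict $j$ to some $V_\alpha$ with $\alpha$ a fixed point above $\lambda^{++}$, invoke Proposition~\ref{Jonnsoness} to conclude $\lambda^{++}$ is J\'onsson, and contradict Theorem~\ref{succ of reg not Jon}. The only cosmetic difference is that you explicitly take $\alpha=\lambda^{++}+1$ and spell out the continuity argument at $\lambda$, whereas the paper simply asserts that $\lambda^{++}$ is fixed and picks an unspecified fixed point $\alpha>\lambda^{++}$.
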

\begin{proof}
    Suppose $j:V\rightarrow V$ is an elementary embedding. Then $\kappa_{\omega}(j)^{++}$ is fixed by $j$. Let $\alpha$ be a fixed point of $j$ above $\kappa_{\omega}(j)^{++}$, and consider the elementary embedding $j\restriction V_\alpha:V_\alpha\rightarrow V_\alpha$. By Proposition \ref{Jonnsoness}, $\kappa_{\omega}(j)^{++}$ is J\'{o}nsson. On the other hand, $\kappa_{\omega}(j)^{++}$ is the successor of a regular cardinal and so, by Theorem \ref{succ of reg not Jon}, it cannot be J\'{o}nsson.
\end{proof}
Among other things, Theorem \ref{succ of reg not Jon} says that, if $\lambda^+$ is J\'{o}nsson, $\lambda$ has to be singular and $\GCH$ fails. Therefore it makes sense to ask whether $\SSH$ holds at $\lambda$. 

The following lemma is due to Shelah. The first assertion can be found in \cite[Corollary 5.9]{Eisworth2010}, while the second one is proved in \cite[Theorem 4.78]{Eisworth2010} (see also \cite[Chapter 2, Claim 1.3]{Sh:g}).
\begin{lem}\label{good scales from not SSH}
    Let $\lambda$ be a singular cardinal. \begin{enumerate}
        \item If $\lambda^+$ is J\'{o}nsson then $\pp(\lambda)>\lambda^+$.
        \item If $\pp(\lambda)>\lambda^+$ then $\lambda$ carries a good scale.
    \end{enumerate}
\end{lem}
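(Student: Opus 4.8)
The plan is to establish the two assertions by separate pcf-theoretic arguments, keeping $\lambda$ of cofinality $\omega$ fixed throughout (as required by Definition \ref{scale, good scale}) and working with an increasing sequence $\langle\kappa_n\rangle_{n<\omega}$ of regular cardinals cofinal in $\lambda$. Recall that by Shelah's pcf theorem the product $\prod_{n<\omega}\kappa_n$ always carries a scale of length $\lambda^+$, so a scale is available for free: the whole difficulty lies in controlling its behaviour at limit points of uncountable cofinality, where the values $\pp(\lambda)$ and goodness are decided.

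For (1) I would run the embedding characterization of Proposition \ref{Jonnsoness}. Since $\lambda^+$ is J\'onsson, fix an elementary $j:N\rightarrow V_\alpha$ with $N$ transitive, $\crit(j)<\lambda^+$ and $j(\lambda^+)=\lambda^+$; as $\lambda$ is the immediate cardinal predecessor of $\lambda^+$, elementarity forces $j(\lambda)=\lambda$, and Theorem \ref{succ of reg not Jon} guarantees that $\lambda$ is singular. The strategy is to feed a scale of $N$ through $j$ and read off a witness that $\pp(\lambda)>\lambda^+$. Concretely, letting $\langle f_\beta\rangle_{\beta<(\lambda^+)^N}$ be a scale of $N$ in $\prod_{n<\omega}\kappa_n$ and setting $h=\langle\sup j[\kappa_n]\rangle_{n<\omega}$, the image scale $j(\langle f_\beta\rangle)$ together with the bound $h$ produces, exactly as in Zapletal's proof of the Kunen inconsistency recalled in Section 2 \cite{MR1317054}, a point below $\lambda^+$ where the relevant initial segment cannot have an exact upper bound of the expected uniform cofinality. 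This eub-less (``bad'') behaviour is then converted, through Shelah's pcf analysis, into an ultrafilter $D$ on a set $A$ of regular cardinals cofinal in $\lambda$ with $\cof(\prod A/D)>\lambda^+$, that is, into $\pp(\lambda)>\lambda^+$.

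I would prove (2) by exploiting the hypothesis $\pp(\lambda)>\lambda^+$ as a source of an extra cofinality. Unwinding the definition of $\pp$, there is a set $A$ of regular cardinals cofinal in $\lambda$ of order type $\omega$ and a regular $\mu$ with $\lambda^+<\mu\leq\pp(\lambda)$ such that $\mu\in\operatorname{pcf}(A)$. Fixing the transitive generators $\langle B_\theta:\theta\in\operatorname{pcf}(A)\rangle$, the presence of $\mu$ beyond $\lambda^+$ means that $B_{\lambda^+}$ is a proper subset of $A$ modulo $J_{<\lambda^+}$. I would then build the scale respecting this generator structure and verify goodness at each $\nu<\lambda^+$ with $\cof(\nu)>\omega$: by the Trichotomy Theorem the only alternative to an exact upper bound of uniform cofinality at $\nu$ is the appearance of a cofinality of some $\prod A/D'$ lying strictly between two consecutive members of $\operatorname{pcf}(A)$, which the generator structure forbids. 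Extracting from the exact upper bound the cofinal set $C$ and the integer $n$ then yields the good scale demanded by Definition \ref{scale, good scale}.

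The main obstacle, in both parts, is the passage through Shelah's exact-upper-bound machinery. In (1) the delicate point is that $N$ may compute $\lambda^+$ incorrectly and that the critical point of $j$ can be singular of cofinality $\omega$ in $V$, so that the clean eub-existence theory (valid at uncountable cofinalities) does not apply verbatim; bridging this gap is precisely the content of the Trichotomy Theorem and the analysis of flat and chaotic points, which is why (1) is credited to Shelah. In (2) the corresponding difficulty is bookkeeping: turning the single extra cofinality $\mu\in\operatorname{pcf}(A)$ into uniform-cofinality exact upper bounds \emph{simultaneously} at all points of uncountable cofinality is exactly the step encapsulated by Shelah's notion of a ``better scale.''
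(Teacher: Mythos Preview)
The paper does not supply its own proof of this lemma; it attributes both parts to Shelah and simply cites \cite[Corollary~5.9 and Theorem~4.78]{Eisworth2010} and \cite[Chapter~2, Claim~1.3]{Sh:g}. So there is no in-paper argument to compare against beyond those pointers.

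Your argument for (1) has a genuine gap. The Zapletal manoeuvre does not manufacture a ``bad point'' in a scale: under its hypotheses it produces an outright contradiction, and those hypotheses are not available here. In the J\'onsson setting the transitive $N$ need not be closed under $\omega$-sequences or compute $\lambda^+$ correctly, so $\langle j(f_\beta)\rangle$ is in general neither cofinal nor of length $\lambda^+$ in $\prod_n j(\kappa_n)$; and nothing in Proposition~\ref{Jonnsoness} forces $\crit(j)<\lambda$, so your $h(n)=\sup j[\kappa_n]$ may simply equal $j(\kappa_n)$ and fail to lie in the product at all. Even if one could salvage a single eub-less point, that alone does \emph{not} yield $\pp(\lambda)>\lambda^+$: bad points in a particular scale are fully compatible with $\pp(\lambda)=\lambda^+$. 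Shelah's actual proof runs the contrapositive and is of a different character---from $\pp(\lambda)=\lambda^+$ one builds, via club guessing and the associated pcf structure, a J\'onsson algebra on $\lambda^+$, so $\lambda^+$ is not J\'onsson. Your sketch for (2) is closer in spirit (one does exploit a generator $B_{\lambda^+}\subsetneq A$ and a larger $\mu\in\operatorname{pcf}(A)$), but your description of the Trichotomy alternatives is inaccurate, and the passage from one extra pcf point to \emph{uniform} goodness at every $\nu$ of uncountable cofinality is precisely the nontrivial content of the cited references, not something the Trichotomy hands you for free.
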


\vspace{1cm}
\section{Discontinuities}

Using the alternative definition of J\'{o}nsson cardinal (Proposition \ref{Jonnsoness}), we provide a different proof of the following lemma due to Caicedo.

\begin{lem}[\cite{Ca}, Theorem 2.5]
\label{no fixed points} 
If $j:M\rightarrow V$ is a cardinal preserving embedding, then $j$ has no fixed points above its critical point. In particular, for all $\lambda>\crit(j)$, if $j[\lambda]\subseteq\lambda$, then $\cof^M(\lambda)\geq\crit(j)$.
\end{lem}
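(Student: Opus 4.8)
The plan is to argue by contradiction in the spirit of the Kunen proof reproduced above: assuming $j$ has a fixed point $\lambda>\kappa$, where $\kappa=\crit(j)$, I would manufacture a cardinal that is simultaneously J\'{o}nsson and the successor of a regular cardinal, contradicting Theorem \ref{succ of reg not Jon}. First note that the ``in particular'' clause reduces to the main assertion: if $\lambda>\kappa$ satisfies $j[\lambda]\subseteq\lambda$ but $\cof^M(\lambda)=\mu<\kappa$, then fixing in $M$ an increasing cofinal sequence $\langle\lambda_i\rangle_{i<\mu}$ and using that $j$ fixes $\mu$ and every $i<\mu$, one sees that $\langle j(\lambda_i)\rangle_{i<\mu}$ is cofinal in $j(\lambda)$; since each $j(\lambda_i)\in j[\lambda]\subseteq\lambda$, this forces $j(\lambda)\le\lambda$, and together with the general inequality $j(\lambda)\ge\lambda$ we get $j(\lambda)=\lambda$, a fixed point above $\kappa$ --- contradiction. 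So it suffices to rule out fixed points.

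The first key step is to promote an arbitrary fixed point to a well-behaved one using cardinal preservation. I would show that the least cardinal strictly above a fixed point is again a fixed point: if $j(\lambda)=\lambda$ and $\sigma$ denotes the least cardinal $>\lambda$, then, since $\Card^M=\Card$, $\sigma$ is also the least $M$-cardinal above $\lambda$, so by elementarity $j(\sigma)$ is the least (true) cardinal above $j(\lambda)=\lambda$, namely $\sigma$ again. Iterating this observation, $\sigma$, $\sigma^+$ and $\theta:=\sigma^{++}$ are all fixed points, and $\theta$ is the successor of the regular cardinal $\sigma^+$ (successor cardinals are regular). This is the crucial place where the hypothesis $\Card^M=\Card$ is used: in Kunen's $V\to V$ setting such a successor-of-regular fixed point ($\kappa_\omega(j)^{++}$) is free, whereas here cardinal correctness is exactly what propagates a fixed point upward and ``rounds it up'' to one of successor-of-regular form.

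The second step turns $\theta$ into a witness of J\'{o}nssonness. Pick a limit ordinal $\beta>\theta$ and restrict $j$ to $N:=V_\beta^M$. Since $j(V_\beta^M)=V_{j(\beta)}^V$ and the satisfaction relation for set structures is absolute, $k:=j\restriction V_\beta^M:V_\beta^M\rightarrow V_{j(\beta)}^V$ is an elementary embedding with $N$ a transitive set, $\crit(k)=\kappa<\theta$, and $k(\theta)=j(\theta)=\theta<j(\beta)$. By Proposition \ref{Jonnsoness}, $\theta$ is J\'{o}nsson. But $\theta=\sigma^{++}$ is the successor of the regular cardinal $\sigma^+$, so Theorem \ref{succ of reg not Jon} says $\theta$ is not J\'{o}nsson --- the desired contradiction.

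The main obstacle I anticipate is the promotion step: an arbitrary fixed point carries no regularity information about its predecessor, and one must check that cardinal preservation lets successor cardinals be both computed the same way in $M$ and $V$ and fixed by $j$, so that $\sigma^{++}$ has precisely the arithmetic shape required by Tryba's theorem. The verification that $k$ is genuinely elementary into a \emph{true} rank initial segment $V_{j(\beta)}^V$ (rather than some $M$-internal object) is routine but should be stated carefully, since it is what allows Proposition \ref{Jonnsoness} to apply with the real universe as the target of the witnessing embedding.
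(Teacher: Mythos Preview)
Your proposal is correct and follows essentially the same approach as the paper: promote a fixed point to a successor-of-regular fixed point via cardinal correctness, restrict $j$ to a rank initial segment of $M$ to witness J\'onssonness, and invoke Tryba's theorem for the contradiction. The only cosmetic difference is that the paper works directly with $\alpha^{++}$ (where $\alpha$ is the original fixed point) rather than passing through an intermediate cardinal $\sigma$, and your extra step to $\sigma^{++}$ is harmless but unnecessary since already $\sigma^{+}$ is the successor of a regular cardinal.
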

\begin{proof}
    Suppose towards a contradiction that there is some ordinal $\alpha$ above $\crit(j)$ such that $j(\alpha)=\alpha$. By cardinal correctness, $j(\alpha^{++})=\alpha^{++}$. Pick an ordinal $\beta>\alpha^{++}$ and stipulate $N=(V_\beta)^M$. Then $j\restriction N$ is an elementary embedding from $N$ to $V_{j(\beta)}$. To see this, note that for $b$ in $N$ and $\psi$ a first-order formula, $N\vDash\psi(b)$ is equivalent to $M\vDash\psi^N(b)$. By elementarity, $M\vDash\psi^N(b)$ if and only if $V\vDash\psi^{V_{j(\beta)}}(j(b))$. Finally, $V\vDash\psi^{V_{j(\beta)}}(j(b))$ is equivalent to $V_{j(\beta)}\vDash\psi(j(b))$. Therefore $j\restriction N$ witnesses that $\alpha^{++}$ is J\'{o}nsson, contradicting the fact that the successor of a regular cardinal cannot be J\'{o}nnson (Theorem \ref{succ of reg not Jon}). 

    To prove the last assertion suppose $\lambda>\crit(j)$, $j[\lambda]\subseteq\lambda$ and $\cof^M(\lambda)<\crit(j)$. Let $A$ be a cofinal subset of $\lambda$ in $M$ with $|A|<\crit(j)$. By elementarity, $j(A)$ is a cofinal subset of $j(\lambda)$. Hence \begin{center}
        $j(\lambda)=\sup j(A)=\sup j[A]\leq\sup j[\lambda]\leq\sup\lambda=\lambda$,
    \end{center} contradicting the fact that $\lambda<j(\lambda)$.
\end{proof}
An immediate corollary is that, if $j:M\rightarrow V$ is a cardinal preserving embedding, then $M$ cannot closed under sequences of size less than $\crit(j)$.

 Despite this discontinuity, we still have a proper class of \emph{closure points}.
\begin{defin}
    Let $j:M\rightarrow N$ be an elementary embedding between two transitive models of $\ZFC$. A cardinal $\lambda$ is \emph{closure point of} $j$ if $j[\lambda]\subseteq\lambda$. 
\end{defin}
So for example if $j:M\rightarrow V$ is cardinal preserving, then $\kappa_\omega(j)$ is a closure point. More generally, one can easily find an $\omega$-club class consisting of strong limit closure points of $j$ of countable cofinality above  $\crit(j)$. 
We will show that such cardinals are either $M$-regular cardinals or predecessors of a J\'{o}nsson cardinal.

\section{The main result}
\paragraph{Factor embeddings.} The key step in the proof of Lemma \ref{regular or pred of Jonsson} below deals with ultrafilters derived from an embedding, and  applied to a model to which they do not belong. Accordingly, we review some basic definitions and facts about relativized ultrapowers and factor embeddings.
\begin{defin}
   Let $j : N \rightarrow P$ be an
elementary embedding between two transitive models of $\ZFC$ and let $x$ be a set in $N$. Suppose $a \in j(x)$. \begin{itemize} 
        \item An $N$\emph{-ultrafilter on} $x$ is a set $U\subseteq P(x)\cap N$ such that
   \begin{center}
       $(N, U)\vDash U$ is an ultrafilter. 
       \end{center}
\item The $N$\emph{-ultrafilter on} $x$ \emph{derived from} $j$ \emph{using} $a$ is the $N$-ultrafilter 
\begin{center}
    $\{A \in P(x) \cap N : a \in j(A)\}$.
    \end{center}
    \end{itemize}
\end{defin} 
If $U$ is an $N$-ultrafilter on a set $x$, $M_U^N$ denotes the unique transitive collapse of the class $\Ult(N,U)=\{[f]_U^N: f:x\rightarrow N\}$, where $[f]_U^N$ is the set of functions $g:x\rightarrow N$ in $N$ such that $g=_U f$ and for each function $h:x\rightarrow N$ in $N$, if $f=_U h$ then $\mathrm{rank}(g)\leq\mathrm{rank}(h)$. The latter requirement ensures that $[f]_U^N$ is a set and it is known as Scott's trick. $\Ult(N,U)$ is called \emph{relativized ultrapower of} $N$ \emph{by} $U$, and will be tacitly identified with $M_U^N$. $N$ is elementarily embeddable in its ultrapower via the map $j^N_U:N\rightarrow M_U^N$, defined as \begin{center}
    $j^N_U:y\mapsto [c_y]_U^N$.
\end{center} We refer to $j^N_U$ as the \emph{the canonical embedding of} $N$ \emph{in} $M^N_U$. 

The following is an useful relationship between an elementary embedding and the ultrapowers associated to its derived ultrafilters.
\begin{lem}\label{factor embedding}
    Let $j : N \rightarrow P$ be an
elementary embedding between two transitive models of $\ZFC$. Suppose $x\in N$ and $a \in j(x)$. Let $U$ be the $N$-ultrafilter on $x$
derived from $j$ using $a$. Then there is an elementary embedding $k:M_U^N\rightarrow P$ such that $k\circ j_U^N=j$ and $k([\id]_U^N)=a$.
\end{lem}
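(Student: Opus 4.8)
The plan is to define $k$ by the single prescription $k([f]_U^N)=j(f)(a)$ for every function $f$ in $N$ with domain $x$, and to check that this one map is simultaneously elementary and witnesses the two required identities. Note first that if $f\in N$ has domain $x$, then by elementarity $j(f)$ is a function in $P$ with domain $j(x)$, so $j(f)(a)$ is a genuine element of $P$ for $a\in j(x)$.

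The whole argument is driven by one computation. Given a formula $\varphi$ and functions $f_1,\dots,f_n$ in $N$ with domain $x$, put $B=\{t\in x: N\vDash\varphi(f_1(t),\dots,f_n(t))\}$; then $B\in P(x)\cap N$ by separation in $N$, and applying $j$ to the formula defining $B$ gives $j(B)=\{t\in j(x): P\vDash\varphi(j(f_1)(t),\dots,j(f_n)(t))\}$. Hence, unwinding the definition of the derived ultrafilter, $B\in U$ iff $a\in j(B)$ iff $P\vDash\varphi(j(f_1)(a),\dots,j(f_n)(a))$. Feeding this into the relativized \L{}o\'s theorem for $\Ult(N,U)$ yields everything at a stroke: taking $\varphi$ to be equality shows that $k$ is well defined, and taking general $\varphi$ gives that $M_U^N\vDash\varphi([f_1]_U^N,\dots,[f_n]_U^N)$ iff $P\vDash\varphi(k([f_1]_U^N),\dots,k([f_n]_U^N))$, i.e. that $k$ is elementary. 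Since $P$ is transitive, hence well-founded, the mere existence of such an elementary $k$ confirms that $\Ult(N,U)$ is well-founded (an infinite descending membership chain in it would be carried to one in $P$), so the transitive collapse $M_U^N$ genuinely exists and $k$ descends to it.

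It then remains only to read the two identities off the same prescription. For commutativity, $c_y$ is the constant function on $x$ with value $y$, so by elementarity $j(c_y)$ is the constant function on $j(x)$ with value $j(y)$, whence $k(j_U^N(y))=k([c_y]_U^N)=j(c_y)(a)=j(y)$, i.e. $k\circ j_U^N=j$. For the generator, $\id$ is the identity on $x$, so $j(\id)$ is the identity on $j(x)$, giving $k([\id]_U^N)=j(\id)(a)=a$. The one step that demands genuine care is the relativized \L{}o\'s theorem itself: because $U$ need not be a true ultrafilter in $V$ and the quotient is formed only from functions lying in $N$, the existential-quantifier step must be carried out internally, invoking the axiom of choice in $N$ (available since $N\vDash\ZFC$) rather than in $V$, while the Boolean and atomic cases use only that $(N,U)\vDash$ ``$U$ is an ultrafilter''.
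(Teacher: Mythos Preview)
Your proof is correct and follows exactly the approach of the paper, which simply stipulates $k([f]_U^N)=j(f)(a)$ and declares the verification routine. You have carefully spelled out that routine verification (well-definedness, elementarity via the relativized \L{}o\'s theorem, the two identities, and even the well-foundedness of the ultrapower), so your write-up is strictly more detailed than the paper's one-line proof.
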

\begin{proof}
    For each $[f]_U^N\in M^N_U$, stipulate $k([f]_U^N)=j(f)(a)$. It is routine to verify that $k$ fulfills the desired properties.
\end{proof}
We refer to the embedding $k$ as the \emph{factor embedding} associated to the derived $N$-ultrafilter $U$.

\paragraph{Good scales at $\lambda$.} In the following $j:M\rightarrow V$ is a cardinal preserving elementary embedding and $\lambda$ is a strong limit closure point of $j$ of countable cofinality above $\crit(j)$.
\begin{lem}\label{regular or pred of Jonsson}
    Either $\lambda$ is regular in $M$ or $\lambda^+$ is J\'{o}nsson.
\end{lem}
\begin{proof}
    Suppose $\lambda$ is singular in $M$. First we factor the embedding $j$. Let $D$ be the $M$-ultrafilter on $\lambda$ derived from $j$ using $\lambda$, let $j_D^M:M\rightarrow M_D^M$ be the canonical embedding of $M$ in $M^M_D$, and let $k : M^M_D \rightarrow V$ be the factor embedding associated to $D$. An easy argument yields $[\id]^M_D=\lambda$. Nevertheless, we provide the proof just to clarify where the closure of $\lambda$ is used.
    \begin{cla}
        $[\id]^M_D=\lambda$.
    \end{cla}
    \begin{proof}[Proof of claim]
        Let $\alpha<\lambda$. As $j[\lambda]\subseteq\lambda$, $j(\alpha)<\lambda$. So $\lambda$ belongs to the set 
            $\{\beta<j(\lambda):j(\alpha)<\beta\}$.
     By definition of $D$, $\{\beta<\lambda:\alpha<\beta\}\in D$. \L o\'{s}'s theorem yields $\alpha\leq j_D^M(\alpha)=[c_\alpha]_D^M<[\id]_D^M$. Now suppose $\alpha<[\id]_D^M$, say $\alpha=[f]_D^M$ for some function $f:\lambda\rightarrow\lambda$ in $M$. By \L o\'{s}'s theorem, $\{\beta<\lambda:f(\beta)<\beta\}\in D$. Hence $\lambda\in\{\beta<j(\lambda):j(f)(\beta)<\beta\}$. Finally, $\alpha=[f]^M_D\leq k([f]^M_D)=j(f)(\lambda)<\lambda$.  
    \end{proof}
    Therefore $\lambda$ is fixed by $k$. A standard argument shows that $\lambda^+$ is correctly computed by $M_D^M$, that is $(\lambda^+)^{M_D^M}=\lambda^+$. To see this, pick a wellorder $\prec$ of $\lambda$ in $M$. By elementarity, $\triangleleft=j_D^M(\prec)\cap(\lambda\times\lambda)$ is a wellorder of $\lambda$ in $M^M_D$ with length $\geq$ $\ot(\prec)$. Thus $(\lambda^+)^{M^M_D}>\ot(\triangleleft)\geq\ot(\prec)$. If $(\lambda^+)^{M^M_D}<(\lambda^+)^M$, there are in $M$ a bijection $f:\lambda\rightarrow(\lambda^+)^{M_D^M}$ and a wellorder $\prec$ of $\lambda$ given by $\alpha\prec\beta$ iff $f(\alpha)\in f(\beta)$, leading to the following contradiction: \begin{center}   $(\lambda^+)^{M_D^M}>\ot(j^M_D(\prec)\cap(\lambda\times\lambda))\geq\ot(\prec)=(\lambda^+)^{M_D^M}$.
\end{center} 
So $(\lambda^+)^{M^M_D}$ has to be $(\lambda^+)^M$. In particular, $(\lambda^+)^{M_D^M}=(\lambda^+)^M=\lambda^+$ and $k(\lambda^+)=\lambda^+$. 
\begin{cla}
    $\crit(k)<\lambda$.
\end{cla}
\begin{proof}[Proof of claim]
    Note that since $\lambda$ is singular in $M$, there is a set $C\in D$ such that $|C|^M < \lambda$. In fact, let $C\in M$ be a closed unbounded subset of $\lambda$
    of ordertype $\cof^M(\lambda)$.
    Then \(j(C)\) is closed in $j(\lambda) > \lambda$, and $j(C)\cap\lambda$ is unbounded in $\lambda$ since it contains $j[C].$ Therefore $\lambda\in j(C),$ so $C\in D$ by the definition of a derived ultrafilter.

    Now $j_D^M$ is continuous at every $M$-regular $\gamma < \lambda$ such that $\gamma > |C|^M$. On the other hand, $j$ is discontinuous at every $M$-regular cardinal by Lemma \ref{no fixed points}.
    Therefore letting $\gamma < \lambda$ be any $M$-regular cardinal such that $\gamma > |C|^M$, we have that $k(j_D^M(\gamma)) = j(\gamma) > j_D^M(\gamma)$, and so since $j_D^M(\gamma) < \lambda$, $\crit(k) < \lambda$.
\end{proof}
    As argued in the proof of Lemma \ref{no fixed points}, we get that $\lambda^+$ is J\'{o}nsson: pick $\beta>\lambda^+$ and let $N$ be $(V_\beta)^{M_D^M}$. Then $k\restriction N:N\rightarrow V_{k(\beta)}$ is an elementary embedding witnessing the J\'{o}nssonness of $\lambda^+$.
\end{proof}
We have already pointed out that both cases lead to some kind of incompactness. Suppose $\lambda$ is regular in $M$. Then $\lambda$, being strong limit, is inaccessible in $M$. So we can apply Lemma \ref{weak square in the outer model} to infer that $\Box_{\lambda,\omega}$ holds. Moreover, Lemma \ref{good scales with square} ensures that $\lambda$ carries a good scale. On the other hand, if $\lambda^+$ is J\'{o}nsson, we get the same conclusion by Lemma \ref{good scales from not SSH}. Altogether we deduce a more quotable corollary:
\begin{cor}\label{good scales yes}
    There is a good scale at $\lambda$.
\end{cor}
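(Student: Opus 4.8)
The plan is to invoke the dichotomy just established in Lemma \ref{regular or pred of Jonsson}: under the standing hypotheses (that $j:M\rightarrow V$ is cardinal preserving and $\lambda$ is a strong limit closure point of countable cofinality above $\crit(j)$), either $\lambda$ is regular in $M$ or $\lambda^+$ is J\'onsson. I would then show that each horn independently produces a good scale at $\lambda$, so the conclusion follows in all cases. The heavy lifting is already encapsulated in that lemma, so what remains is to check that the hypotheses of the quoted scale-existence results are satisfied.

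For the first horn, suppose $\lambda$ is regular in $M$. Since $\lambda$ is a strong limit in $V$ and $M\subseteq V$ has no more subsets of any $\mu<\lambda$ than $V$ does, $\lambda$ remains a strong limit in $M$; together with $M$-regularity this makes $\lambda$ inaccessible in $M$. I would then apply Lemma \ref{weak square in the outer model} with the pair $M\subseteq V$ playing the role of $V\subseteq W$. Its three hypotheses hold: $\lambda$ is inaccessible in $M$ (just checked), singular of countable cofinality in $V$ (given), and $(\lambda^+)^M=(\lambda^+)^V$ (cardinal preservation). The lemma gives $V\vDash\Box_{\lambda,\omega}$, and Lemma \ref{good scales with square} then yields a good scale at $\lambda$.

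For the second horn, suppose $\lambda^+$ is J\'onsson. Here I would simply chain the two clauses of Lemma \ref{good scales from not SSH}: clause (1) gives $\pp(\lambda)>\lambda^+$, and clause (2) converts this into a good scale at $\lambda$. This exhausts the dichotomy. I expect no genuine obstacle, since both scale-existence lemmas are used as black boxes; the only point requiring care is the bookkeeping for Lemma \ref{weak square in the outer model}, namely transferring \emph{strong limit} from $V$ down to $M$ and invoking cardinal preservation to equate $(\lambda^+)^M$ with $(\lambda^+)^V$.
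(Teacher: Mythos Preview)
Your proposal is correct and follows precisely the paper's own argument: apply the dichotomy of Lemma \ref{regular or pred of Jonsson}, in the first case use inaccessibility of $\lambda$ in $M$ together with Lemma \ref{weak square in the outer model} and Lemma \ref{good scales with square}, and in the second case chain the two clauses of Lemma \ref{good scales from not SSH}. Your explicit verification that strong-limitness descends to $M$ and that $(\lambda^+)^M=(\lambda^+)^V$ is exactly the bookkeeping the paper leaves implicit.
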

Now we aim to show that this cannot be the case.
\paragraph{Embeddings into $V$.} In \cite{a4c6f821-6ac9-3a48-a9eb-4ca5d88ea129}, Bagaria and Magidor proved that $\SCH$ holds above an $\omega_1$-strongly compact cardinal. They essentially used the following theorem, together with a result due to Shelah \cite{Sh:g} asserting that the failure of $\SCH$ implies the existence of a good scale.

\begin{teo}[Bagaria-Magidor]\label{Bag-Mag thm} If $\kappa$ is $\omega_1$-strongly compact, then there is no $\lambda>\kappa$ of countable cofinality carrying a good scale.
\end{teo}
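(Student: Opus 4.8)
The plan is to argue by contradiction: assume $\kappa$ is $\omega_1$-strongly compact and that some $\lambda>\kappa$ with $\cof(\lambda)=\omega$ carries a good scale $\vec f=\langle f_\alpha\rangle_{\alpha<\lambda^+}$ in $\prod_{n<\omega}\kappa_n$, where $\langle\kappa_n\rangle_{n<\omega}$ is increasing and cofinal in $\lambda$ with (discarding an initial segment) $\kappa_0>\kappa$. First I would push $\vec f$ through an ultrapower. Using $\omega_1$-strong compactness, extend the fine filter on $[\lambda^+]^{<\kappa}$ to an $\omega_1$-complete fine ultrafilter $U$ and let $j:V\to M$ be the associated ultrapower embedding. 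Two features of $U$ are used throughout: by $\omega_1$-completeness $M$ is closed under countable sequences, so $j$ is continuous at every ordinal of cofinality $\omega$ and in particular $j(\lambda)=\sup_{n}j(\kappa_n)$; and by fineness the seed $s=[\id]_U\in M$ satisfies $j[\lambda^+]\subseteq s\subseteq j(\lambda^+)$ and $|s|^M<j(\kappa)$. By elementarity $j(\vec f)=\langle\bar f_\xi\rangle_{\xi<j(\lambda^+)}$ is a good scale at $j(\lambda)$ relative to $\langle j(\kappa_n)\rangle_{n<\omega}$, and $\bar f_{j(\alpha)}=j(f_\alpha)$, so $\bar f_{j(\alpha)}(n)=j(f_\alpha(n))$ for every $\alpha<\lambda^+$.

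Next I would study the closure point $\gamma=\sup j[\lambda^+]$. Since $j[\lambda^+]$ is cofinal in $\gamma$ of order type $\lambda^+$, we have $\cof^V(\gamma)=\lambda^+$, and as $M\subseteq V$ this gives $\cof^M(\gamma)\geq\lambda^+>\omega$; moreover the seed yields $\gamma<j(\lambda^+)$ because $|s|^M<j(\kappa)<j(\lambda^+)$ and $j(\lambda^+)$ is regular in $M$. The crucial object is the function $f^{\ast}=\langle\sup j[\kappa_n]\rangle_{n<\omega}$: it belongs to $M$ by countable closure, and since $j[\kappa_n]\subseteq s$ with $|s|^M<j(\kappa)\leq j(\kappa_n)$ and $j(\kappa_n)$ regular in $M$, we get $\sup j[\kappa_n]<j(\kappa_n)$, so $f^{\ast}\in\prod_n j(\kappa_n)$. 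I claim $f^{\ast}$ is an \emph{exact upper bound} for $\langle\bar f_\xi\rangle_{\xi<\gamma}$ in $M$. It is an upper bound since $\bar f_{j(\alpha)}(n)=j(f_\alpha(n))<\sup j[\kappa_n]=f^{\ast}(n)$ and $j[\lambda^+]$ is cofinal in $\gamma$. For exactness, given $g<^{\ast}f^{\ast}$ in $M$, write $g=[G]_U$ and choose for each large $n$ an ordinal $\beta_n<\kappa_n$ with $g(n)<j(\beta_n)$; here $\omega_1$-completeness is decisive, as it lets me intersect the countably many $U$-large sets $\{x:G(x)(n)<\beta_n\}$ to conclude that $G(x)<^{\ast}\langle\beta_n\rangle_{n}$ on a $U$-large set, and then a single $f_{\alpha_0}$ dominating $\langle\beta_n\rangle_n$ in the ground-model scale gives $g<^{\ast}\bar f_{j(\alpha_0)}$ with $j(\alpha_0)<\gamma$.

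Finally I would invoke goodness at $\gamma$. As $\cof^M(\gamma)>\omega$, there are $C\in M$ cofinal in $\gamma$ of order type $\mu:=\cof^M(\gamma)$ and $n_0<\omega$ such that $\langle\bar f_\xi(k):\xi\in C\rangle$ is strictly increasing for all $k\geq n_0$. Exactness of $f^{\ast}$ forces $\sup_{\xi\in C}\bar f_\xi(k)=f^{\ast}(k)=\sup j[\kappa_k]$ for all large $k$: if instead this sup were strictly below $f^{\ast}(k)$ for infinitely many $k$, the function taking that value on those $k$ and $0$ elsewhere would lie $<^{\ast}f^{\ast}$, so by exactness and cofinality of $C$ it would be dominated by some $\bar f_{\xi_1}$ with $\xi_1\in C$; but on each such $k$ its value is at least $\bar f_{\xi_1}(k)$, a contradiction. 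Hence, for all large $k$, $\langle\bar f_\xi(k):\xi\in C\rangle$ is a strictly increasing sequence of order type $\mu$ cofinal in $\sup j[\kappa_k]$, whence $\cof^V(\mu)=\cof^V(\sup j[\kappa_k])=\kappa_k$, the last equality because $j[\kappa_k]$ is cofinal in $\sup j[\kappa_k]$ of order type the regular cardinal $\kappa_k$. But $\mu$ is a fixed ordinal and the $\kappa_k$ are strictly increasing, so $\cof^V(\mu)$ cannot equal $\kappa_k$ for every large $k$, a contradiction.

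The step I expect to be the \textbf{main obstacle} is the verification that $f^{\ast}$ is an \emph{exact} upper bound, since this is the only place where the merely coordinatewise information $g<^{\ast}f^{\ast}$ must be upgraded to domination by a single member of the ground-model scale; this upgrade is exactly what the countable-intersection property of the $\omega_1$-complete ultrafilter $U$ provides. The remaining care goes into the routine but necessary points that $j$ is continuous at ordinals of cofinality $\omega$ and that the normalization $\kappa_0>\kappa$ secures $\sup j[\kappa_n]<j(\kappa_n)$, both of which feed the cofinality computation that closes the argument.
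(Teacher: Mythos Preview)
The paper does not give its own proof of this theorem; it quotes the result from Bagaria--Magidor and then reproduces their technique only in the proof of Lemma~\ref{no good scales}. Comparing your argument with that technique: both begin by pushing the good scale through an embedding $j$ and looking at $\gamma=\sup j[\lambda^+]$, but from there they diverge. The Bagaria--Magidor argument (as carried out in the paper's Lemma~\ref{no good scales}) takes the good cofinal set $C\subseteq\gamma$, interleaves an increasing $\lambda^+$-sequence $\langle\gamma_\delta\rangle$ from $C$ with points $j(\alpha_\delta)$, applies pigeonhole to fix a single coordinate $k$ that works for $\lambda^+$-many $\delta$, and finishes with an ordertype count: the values $g_{j(\alpha_\delta)}(k)$ form a $\lambda^+$-sequence inside $j[\kappa_k]$, which has ordertype only $\kappa_k$. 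You instead isolate $f^\ast=\langle\sup j[\kappa_n]\rangle_n$, verify it is an \emph{exact} upper bound for the image scale below $\gamma$, and derive the contradiction $\cof^V(\mu)=\kappa_k$ for all large $k$.

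Your argument is correct. (The \L o\'s detour for exactness is more than you need: once $g(n)<j(\beta_n)$ and $\langle\beta_n\rangle<^\ast f_{\alpha_0}$ in $V$, one has $g(n)<j(\beta_n)<j(f_{\alpha_0}(n))=\bar f_{j(\alpha_0)}(n)$ for large $n$ directly.) Your exact-upper-bound route is conceptually clean and makes explicit why $\omega_1$-completeness matters: it places $f^\ast$ and the auxiliary function $h$ into $M$, and it guarantees that the sequence $\langle\beta_n\rangle$ lives in the \emph{domain} model $V$ where the original scale can dominate it. The interleaving/pigeonhole route, by contrast, never needs $f^\ast$ to be an exact upper bound and never needs any $\omega$-sequence to lie in the domain model; this is precisely why it transfers verbatim to the paper's setting $j:M\to V$ in Lemma~\ref{no good scales}, where there is no ultrapower representation of $j$, the sequences $\langle\beta_n\rangle$ built in $V$ need not lie in $M$, and hence exactness of $f^\ast$ is not available.
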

We will not need Bagaria and Magidor's theorem but rather its proof. Indeed, it carries over exactly to our context:
\begin{lem}\label{no good scales}
    If $j:M\rightarrow V$ is cardinal preserving, then no singular cardinal $\lambda>j(\crit(j))$ of countable cofinality carries a good scale.
\end{lem}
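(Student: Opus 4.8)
The plan is to transcribe the argument underlying Theorem \ref{Bag-Mag thm}, with $j$ playing the part of the ultrapower by an $\omega_1$-complete fine ultrafilter. Bagaria and Magidor's proof uses only two features of such an ultrapower map: continuity at ordinals of cofinality $\omega$, and sufficient discontinuity at the regular cardinals indexing the scale and at the relevant successor cardinal. A cardinal preserving $j$ supplies both for free. Since $\crit(j)>\omega$, $j$ commutes with suprema of $\omega$-sequences and so is continuous at every point of cofinality $\omega$ in its domain; and if $\mu>\crit(j)$ is regular in $M$ then $j(\mu)$ is regular in $V$ by elementarity and $j(\mu)>\mu$ by Lemma \ref{no fixed points}, whence $\sup j[\mu]<j(\mu)$, i.e.\ $j$ is discontinuous at $\mu$. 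Finally, cardinal preservation keeps the successor cardinals that arise regular in $V$. These observations are exactly what replace strong compactness.

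Consider first the case $\lambda\in\ran(j)$, say $\lambda=j(\bar\lambda)$. Then $\bar\lambda>\crit(j)$ (as $\lambda>j(\crit(j))$), $\bar\lambda$ is singular of cofinality $\omega$ in $M$, and by elementarity $M$ carries a good scale $\langle \bar f_\alpha\rangle_{\alpha<\bar\lambda^+}$ at $\bar\lambda$ with coordinates $\langle\bar\kappa_n\rangle$ increasing to $\bar\lambda$. Pushing it forward, $\langle j(\bar f_\alpha)\rangle$ is a good scale in $V$ at $j(\bar\lambda)$ relative to $\langle j(\bar\kappa_n)\rangle$, and $j(\bar\lambda)=\sup_n j(\bar\kappa_n)$ by continuity at $\bar\lambda$. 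Put $\eta=\sup j[\bar\lambda^+]$. Discontinuity of $j$ at the regular cardinal $\bar\lambda^+$ gives $\eta<j(\bar\lambda^+)$, so $\eta$ lies strictly below the length of the pushed scale, while $j[\bar\lambda^+]$ is cofinal in $\eta$ of order type $\bar\lambda^+$, so that $\cof^V(\eta)=\cof^V(\bar\lambda^+)=\bar\lambda^+$ (the last equality by cardinal preservation), which is uncountable; hence $\eta$ is a good point. Next set $h(n)=\sup j[\bar\kappa_n]$; discontinuity of $j$ at $\bar\kappa_n$ puts $h(n)<j(\bar\kappa_n)$, so $h\in\prod_n j(\bar\kappa_n)$. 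Since $n$ is finite, $j(\bar f_\alpha)(n)=j(\bar f_\alpha(n))\le\sup j[\bar\kappa_n]=h(n)$, so $h$ dominates every $j(\bar f_\alpha)$ pointwise; a routine argument, using that $\langle j(\bar f_\alpha)\rangle$ is a scale, then upgrades $h$ to an exact upper bound of the initial segment $\langle j(\bar f_\alpha)\rangle_{\alpha<\bar\lambda^+}$ at $\eta$.

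The contradiction comes from computing $\cof^V(h(n))$ twice. Because $\eta$ is a good point, the scale furnishes an exact upper bound whose values have cofinality $\cof^V(\eta)=\bar\lambda^+$ on a tail of coordinates; as exact upper bounds of a fixed $<^\ast$-increasing sequence agree modulo finite, $\cof^V(h(n))=\bar\lambda^+$ for all large $n$. On the other hand $j[\bar\kappa_n]$ realizes $h(n)$ as the supremum of a set of order type $\bar\kappa_n$, so $\cof^V(h(n))=\cof^V(\bar\kappa_n)\le\bar\kappa_n<\bar\lambda<\bar\lambda^+$. These are incompatible, which finishes this case.

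The step I expect to be genuinely delicate is the general case, where the hypothesis hands us a good scale in $V$ at a $\lambda$ that need not lie in $\ran(j)$ (for instance at a closure point of $j$). The computation above needs the scale in the \emph{domain} of an elementary map into $V$, so that it can be transported forward; this is the reverse of the strongly compact situation, where the embedding already issues from the model carrying the scale. I would bridge the gap with the relativized ultrapower machinery of Lemma \ref{factor embedding}: choosing a seed that captures $\lambda$ and factoring $j=k\circ j^M_D$, one pulls the $V$-scale back along the factor map $k\colon M^M_D\to V$ to a good scale in $M^M_D$, which is then the object pushed forward into $V$. The crux is verifying that the factor map is still discontinuous at the successor cardinal corresponding to $\bar\lambda^+$ -- equivalently, that the good point $\eta$ can be placed strictly below the top of the pushed scale -- and that this successor remains regular in $V$; everything else is the pcf bookkeeping of Definition \ref{scale, good scale} together with the uniqueness of exact upper bounds modulo finite.
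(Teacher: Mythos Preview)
Your case split on whether $\lambda\in\ran(j)$ is the wrong organizing principle and leaves the proof incomplete. The paper avoids it with a one-line observation: the hypothesis says that $V$ satisfies ``there is a singular cardinal of countable cofinality above $j(\crit(j))$ carrying a good scale,'' and since $j(\crit(j))\in\ran(j)$, elementarity reflects this \emph{existential} to $M$, yielding some $\bar\lambda>\crit(j)$ with $\cof^M(\bar\lambda)=\omega$ carrying a good scale in $M$. One then works entirely with this $\bar\lambda$, pushing the $M$-scale forward by $j$; there is no need for the original $\lambda$ to be a $j$-image, and hence no need for any factor-embedding detour. Your sketched ``general case'' is thus solving a nonproblem.

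In the case you do treat, the step ``a routine argument, using that $\langle j(\bar f_\alpha)\rangle$ is a scale, then upgrades $h$ to an exact upper bound'' is a genuine gap. The sequence $\langle j(\bar f_\alpha)\rangle_{\alpha<\bar\lambda^+}$ is \emph{not} a scale in $\prod_n j(\bar\kappa_n)$: the scale there is $\langle g_\beta\rangle_{\beta<j(\bar\lambda^+)}$, and your sequence is merely its restriction to the proper initial segment below $\eta$. So from $f<^*h$ you get, for each large $k$, an $\alpha_k$ with $f(k)<j(\bar f_{\alpha_k}(k))$, but the choice function $k\mapsto\alpha_k$ need not lie in $M$ and there is no reason a single $\alpha$ works. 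In fact your own computation shows $\cof(h(k))=\bar\kappa_k\neq\bar\lambda^+$, so $h$ is \emph{not} equal modulo finite to the good-point exact upper bound; all this establishes is that $h$ fails to be exact, not a contradiction. The paper instead runs the original Bagaria--Magidor interleaving: take the goodness witness $(C,n)$ at $\eta$, interleave $C$ with $j[\bar\lambda^+]$ to get $\gamma_\delta<j(\alpha_\delta)<\gamma_{\delta+1}$, pigeonhole to a single coordinate $k\ge n$, and observe that the resulting strictly increasing sequence $\langle g_{j(\alpha_\delta)}(k)\rangle$ of length $\bar\lambda^+$ lives inside $j[\bar\kappa_k]$, which has ordertype $\bar\kappa_k<\bar\lambda^+$. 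This sidesteps exact upper bounds entirely.
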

\begin{proof}
    Towards a contradiction, suppose there is a singular cardinal greater than $j(\crit(j))$ of countable cofinality carrying a good scale. By elementarity, this is true in $M$, namely \begin{center}
        $M\vDash$ there is a cardinal $\lambda>\crit(j)$ with $\cof(\lambda)=\omega$ carrying a good scale.
    \end{center}
    
    Working in $M$, let $\langle f_\alpha\rangle_{\alpha<\lambda^+}$ be a good scale, relative to an increasing sequence of regular cardinals $\langle\lambda_n\rangle_{n<\omega}$ with limit $\lambda$. By Lemma \ref{no fixed points} $\lambda^+<j(\lambda^+)$. Passing to $V$, let $\beta=\sup j[\lambda^+]$. Since $\lambda^{+M}=\lambda^+$, we have $j(\lambda^+)=j(\lambda^{+M})=j(\lambda)^+$. In particular, $j(\lambda^+)$ is regular. On the other hand, $\beta$ is the supremum of a subset of $j(\lambda^+)$ of cardinality
 less than $j(\lambda^+)$, and therefore bounded in $j(\lambda^+)$. Thus, $\beta<j(\lambda^+)$. Elementarity of $j$ leads to \begin{center}
     $V\vDash j(\langle f_\alpha\rangle_{\alpha<\lambda^+})$ is a good scale relative to $\langle j(\lambda_n)\rangle_{n<\omega}$.
 \end{center}
Say $j(\langle f_\alpha\rangle_{\alpha<\lambda^+})=\langle g_\alpha\rangle_{\alpha<j(\lambda^+)}$. Since $\beta<j(\lambda^+)$ and has uncountable cofinality, we can use the goodness of $\langle g_\alpha\rangle_{\alpha<j(\lambda^+)}$ to pick a cofinal subset $C$ of $\beta$ such that, for some $n<\omega$, for all $\xi_0<\xi_1$ in $C$, and for all $k\geq n$, $g_{\xi_0}(k)<g_{\xi_1}(k)$. Following the proof of \cite[Theorem 4.1]{a4c6f821-6ac9-3a48-a9eb-4ca5d88ea129}, we will define by induction on $\delta<\lambda^+$ a strictly increasing sequence of ordinals $\langle \gamma_\delta\rangle_{\delta<\lambda^+}$ contained in $C$, and an auxiliary sequence of ordinals $\langle\alpha_\delta\rangle_{\delta<\lambda^+}$ such that $\gamma_\delta<j(\alpha_\delta)<\gamma_{\delta+1}$, for all $\delta<\lambda^+$. Let $\gamma_0$ be the first ordinal in $C$. Let $\alpha_0$ be the least ordinal such that $\gamma_0 < j(\alpha_0)$.
 Then let $\gamma_1\in C$  be such that $j(\alpha_0) < \gamma_1$. Then, let $\alpha_1$ be the least ordinal such that
 $\gamma_1 < j(\alpha_1)$. And so on. At limit stages, take the least $\gamma\in C$ greater than all the
 ordinals $\gamma_\delta$ picked so far. Clearly, $\alpha_\delta<\lambda^+$, for all $\delta<\lambda^+$. For each $\delta<\lambda^+$, we have $g_{\gamma_\delta}<^\ast g_{j(\alpha_\delta)}<^\ast g_{\gamma_{\delta+1}}$, and so we may pick some $n_\delta>n$ such that $g_{\gamma_\delta}(m)<g_{j(\alpha_\delta)}(m)<g_{\gamma_{\delta+1}}(m)$, for all $m\geq n_\delta$. By the pigeonhole principle, there is some $D\subseteq\lambda^+$ of cardinality $\lambda^+$ such that for all $\delta\in D$, the $n_\delta$ is the same, say $k$. In particular, if $\delta\in D$, then $g_{\gamma_\delta}(k)<g_{j(\alpha_\delta)}(k)<g_{\gamma_{\delta+1}}(k)$. On the other hand, if $\delta_0,\delta_1\in D$ and $\delta_0+1<\delta_1$, then $g_{\gamma_{\delta_0+1}}(k)<g_{\gamma_{\delta_1}}(k)$, by goodness. Therefore, \begin{center}
     $g_{\gamma_{\delta_0}}(k)<g_{j(\alpha_{\delta_0})}(k)<g_{\gamma_{\delta_0+1}}(k)<g_{\gamma_{\delta_1}}(k)<g_{j(\alpha_{\delta_1})}(k)<g_{\gamma_{\delta_1+1}}(k)$,
 \end{center}
    whenever $\delta_0,\delta_1\in D$ and $\delta_0+1<\delta_1$. Note that for every $\delta<\lambda^+$, \begin{center}
        $g_{j(\alpha_\delta)}(k)=j(f_{\alpha_\delta}(k))\in j[\lambda_k]$.
    \end{center}
    But this is impossible since the sequence $\langle g_{\gamma_\delta}(k)\rangle_{\delta\in D}$ has ordertype $\lambda^+$, and $\ot(\langle g_{\gamma_\delta}(k)\rangle_{\delta\in D})=\ot(\langle g_{j(\alpha_\delta)}(k)\rangle_{\delta\in D})\leq \ot(j[\lambda_k])$. However, the ordertype of $j[\lambda_k]$ is $\lambda_k<\lambda^+$.  This is a contradiction.
\end{proof}
\begin{teo}\label{main thm}
    There are no nontrivial cardinal preserving elementary embeddings from an inner model $M$ into the universe of sets $V$.
\end{teo}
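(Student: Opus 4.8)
The plan is to play the two opposing conclusions already assembled against each other: Corollary \ref{good scales yes} manufactures a good scale, while Lemma \ref{no good scales} forbids one. It therefore suffices to produce a single cardinal to which both apply. So I would assume towards a contradiction that $j:M\rightarrow V$ is a nontrivial cardinal preserving embedding and set $\kappa=\crit(j)$, which exists precisely because $j$ is nontrivial.

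First I would locate the target cardinal. As observed just before Lemma \ref{regular or pred of Jonsson}, the strong limit closure points of $j$ of countable cofinality above $\kappa$ form an $\omega$-club class, which is in particular unbounded in $\Ord$. I would thus pick such a $\lambda$ with the extra demand $\lambda>j(\kappa)$. Since $\lambda$ is a strong limit of cofinality $\omega$ sitting above the uncountable cardinal $\kappa$, it is a singular cardinal of $V$; and it still retains every feature needed to feed it into the earlier results, namely being a strong limit closure point of countable cofinality above $\crit(j)$.

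Now the two results collide at $\lambda$. On one side, $\lambda$ satisfies the hypotheses of Corollary \ref{good scales yes} (via the dichotomy of Lemma \ref{regular or pred of Jonsson}: $\lambda$ is either inaccessible in $M$, whence $\Box_{\lambda,\omega}$ and a good scale through Lemmas \ref{weak square in the outer model} and \ref{good scales with square}, or $\lambda^+$ is J\'onsson, whence a good scale through Lemma \ref{good scales from not SSH}), so $\lambda$ carries a good scale. On the other side, $\lambda$ is a singular cardinal of countable cofinality with $\lambda>j(\kappa)$, so Lemma \ref{no good scales} says $\lambda$ carries no good scale. This contradiction finishes the argument.

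The one point requiring care, and the only real (if mild) obstacle, is reconciling the two thresholds. Corollary \ref{good scales yes} is phrased for closure points merely above $\crit(j)$, whereas Lemma \ref{no good scales} demands a cardinal above $j(\crit(j))$. The fix is exactly the unboundedness of the $\omega$-club of closure points: one chooses $\lambda$ large enough to clear $j(\kappa)$ while keeping all the structural properties intact. Beyond this, the proof is a direct concatenation of the lemmas already in hand.
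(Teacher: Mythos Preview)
Your proposal is correct and follows essentially the same approach as the paper: pick a strong limit closure point $\lambda$ of countable cofinality above $j(\crit(j))$, then invoke Corollary~\ref{good scales yes} to produce a good scale at $\lambda$ and Lemma~\ref{no good scales} to forbid one. Your explicit remark about reconciling the two thresholds (above $\crit(j)$ versus above $j(\crit(j))$) via the unboundedness of the $\omega$-club of closure points is a helpful clarification that the paper leaves implicit.
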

\begin{proof}
    Suppose not and let $j:M\rightarrow V$ be a cardinal preserving elementary embedding. Let $\lambda$ be the first strong limit closure point of $j$ of countable cofinality strictly above $j(\crit(j))$. By Corollary \ref{good scales yes}, there is good scale at $\lambda$. But Lemma \ref{no good scales} says that this is impossible. 
\end{proof}

\bibliographystyle{abbrv}
\bibliography{main}

\end{document}